\newcounter{CountEx}
\newtheorem{problem}{\textbf{Problem}}{\bf}{\normalfont}
\newtheorem{example}{\textbf{Example}}[CountEx]{\bf}{\normalfont}
{\bf}{\normalfont}
{\normalfont}{\normalfont}
{\normalfont}{\normalfont}
\newtheorem{proposition}{\textbf{Proposition}}{\bf}{\normalfont}
\newenvironment*{proof}[1]{\textbf{\emph{Proof}} }{}
\newcommand{\setpoly}[2]{\mathbb{R}_{#1}^{#2}}
\newcommand{\setinteger}[2]{\mathcal{A}_{#1}^{#2}}
\newcommand{\canbas}[2]{b_{#1}^{#2}}
\newcommand{\DP}[1]{{\color{black}{#1}}} %
\newcommand{\Zvec}{z}
\newcommand{\Zetavec}{z}
\newcommand{\Zvecsupp}{\bm{Z}}
\definecolor{comm}{rgb}{0,0,0.9}
\newtheorem{property}{\noindent \textbf{Property}} {\normalfont }{\normalfont}
 {\normalfont }{\normalfont}
 {\normalfont }{\normalfont}
{\normalfont}{\normalfont}
{\normalfont}{\normalfont}
\newtheorem{theorem}{\noindent \textbf{Theorem}}{\normalfont}{\normalfont}
{\normalfont}{\normalfont}
\newtheorem{corollary}{Corollary}{\normalfont}{\normalfont}
\newtheorem{lemma}{Lemma}{\normalfont}{\normalfont}
{\normalfont}{\normalfont}
\newtheorem{remark}{\noindent \textbf{Remark}}{\normalfont}{\normalfont}
{\normalfont}{\normalfont}
\newcommand{\DPrev}[1]{{\color{black}{#1}}} %
\newcommand{\Unc}{\Delta}
\newcommand{\MatrixA}{A}
\newcommand{\sizevec}[1]{n_{#1}}
\newcommand{\unc}{\rho} 
\title{\Large A unified framework for deterministic and probabilistic $\mathcal{D}$-stability analysis of uncertain polynomial matrices \\
\ \\
{\large Technical Report TR-IDSIA-2017-01}
}
\date{}
\author{Dario Piga and Alessio Benavoli}
\begin{document}

\maketitle


\section*{Foreword}
This report is an extended version of the paper \emph{A unified framework for deterministic and probabilistic $\mathcal{D}$-stability analysis of uncertain polynomial matrices} submitted by the authors to the IEEE Transactions on Automatic Control.

\section*{Abstract}
\DPrev{In control theory, we are often interested in robust $\mathcal{D}$-stability analysis, which aims at verifying if all the eigenvalues of an uncertain matrix lie in a given region $\mathcal{D}$ of the complex plane. 
Although many algorithms have been developed to provide conditions  for an uncertain matrix to be robustly $\mathcal{D}$-stable, the problem of computing the probability of an uncertain matrix to be $\mathcal{D}$-stable is still unexplored.   
The goal of this paper is to fill this gap by generalizing algorithms for robust  $\mathcal{D}$-stability analysis in two directions. First, the only constraint on the stability region  $\mathcal{D}$ that we impose is that its complement is a semialgebraic set described by polynomial constraints. This comprises  main important cases in robust control theory.
Second, the  $\mathcal{D}$-stability analysis problem is formulated in a probabilistic framework, by assuming that only few probabilistic information
is  available on the uncertain parameters, such as support and some moments.
 We will  show how to efficiently compute the minimum probability that the matrix is $\mathcal{D}$-stable by using  convex relaxations  based on the theory of moments. We will also show that standard robust $\mathcal{D}$-stability is a particular case of the more general probabilistic $\mathcal{D}$-stability problem. Application to  robustness and probabilistic analysis of dynamical  systems is discussed. }

 \DPrev{In control theory, we are often interested in robust $\mathcal{D}$-stability analysis, which aims at verifying if all the eigenvalues of an uncertain matrix lie in a given region $\mathcal{D}$ of the complex plane. 
Although many algorithms have been developed to provide conditions  for an uncertain matrix to be robustly $\mathcal{D}$-stable, the problem of computing the probability of an uncertain matrix to be $\mathcal{D}$-stable is still unexplored.   
The goal of this paper is to fill this gap by generalizing algorithms for robust  $\mathcal{D}$-stability analysis in two directions. First, the only constraint on the stability region  $\mathcal{D}$ that we impose is that its complement is a semialgebraic set described by polynomial constraints. This comprises  main important cases in robust control theory.
Second, the  $\mathcal{D}$-stability analysis problem is formulated in a probabilistic framework, by assuming that only few probabilistic information
is  available on the uncertain parameters, such as support and some moments.
 We will  show how to efficiently compute the minimum probability that the matrix is $\mathcal{D}$-stable by using  convex relaxations  based on the theory of moments. We will also show that standard robust $\mathcal{D}$-stability is a particular case of the more general probabilistic $\mathcal{D}$-stability problem. Application to  robustness and probabilistic analysis of dynamical  systems is discussed. }

\section{Introduction} 
\DPrev{
\subsection{Motivations}
Consider a plant described by the transfer function
$$
G(s)=\dfrac{\rho_2}{s+\rho_1},
$$
where $\rho_1,\rho_2$ are uncertain parameters belonging to the intervals
 $\rho_1\in [0.035,0.085]$ and $\rho_2\in [12,28]$. 
 Although  these parameters can take any value  in the corresponding uncertainty intervals, we assume that they are usually close to their nominal values (in this case, the centers of the   intervals).  The goal is to design a controller that robustly  stabilizes the closed-loop system and has a fast
 unit step response possibly without overshoots.
%
 Assume we have designed two controllers $\mathcal{K}_{\mathrm{ROB}}$ and $\mathcal{K}_{\mathrm{PROB}}$ and we want to understand which one is better. First we check the stability requirements. Both of them robustly stabilize the closed-loop system. 
 Then we check the performance by evaluating the unit step response of the controlled system for $100$ different parameter realizations  (see Fig. \ref{fig:outputintro}).  It is evident that $\mathcal{K}_{\mathrm{PROB}}$ (red) is preferable in terms of speed (half settling time). However, $\mathcal{K}_{\mathrm{ROB}}$ (blue) has never overshoots, while $\mathcal{K}_{\mathrm{PROB}}$ has one   overshoot out of $100$ parameter realizations. Should we then choose $\mathcal{K}_{\mathrm{ROB}}$ or $\mathcal{K}_{\mathrm{PROB}}$?
 
 Assume we were able to translate the knowledge  that the  parameters $\rho_1,\rho_2$ are usually close to their nominal values in terms of weak probabilistic constraints,  such as $\mathbb{E}[\rho_1]=0.06$, $\mathbb{E}[\rho_2]=20$   and $\mathbb{E}[(\rho_1-0.06)^2]\leq \sigma_1^2= 0.0025$, $\mathbb{E}[(\rho_2-20)^2]\leq \sigma_2^2=0.25$ for instance. Moreover assume that, based on this information,  we could compute the probability that the step response of the closed-loop system  does not have overshoots  and that this probability is  $p=1$ with the controller $\mathcal{K}_{\mathrm{ROB}}$ and  $p=0.95$ with $\mathcal{K}_{\mathrm{PROB}}$. Then, if we accept a $5\%$ probability of having  an overshoot,   we could claim that   $\mathcal{K}_{\mathrm{PROB}}$ is preferable. 
  It is like in car racing:  pilots know that if they exceed the track limits (overshoot) this  causes them to lose time.
 However, they accept a small probability of exceeding the  limits because in this way they can  set faster laps. In this paper, we provide   a mathematical framework to quantify this probability.

\begin{figure}[!b]
\centering
 \includegraphics[ trim={0cm 0cm 9cm 25cm},clip]{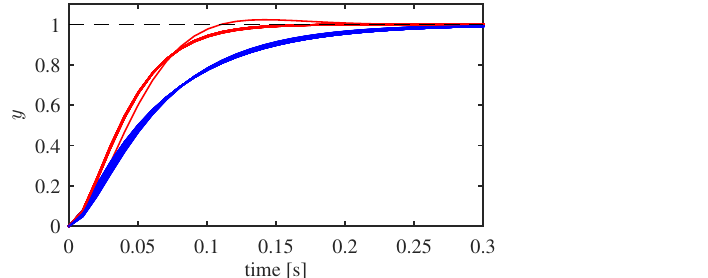}
	\caption{\DPrev{Closed-loop step response with controller $\mathcal{K}_{\mathrm{ROB}}$ (blue) and $\mathcal{K}_{\mathrm{PROB}}$ (red)  for $100$ different realization of the uncertain parameters.} } \label{fig:outputintro}
\end{figure} }

\subsection{Contribution}

\DPrev{
In control theory for \emph{Linear Time-Invariant} (LTI) systems, robust (or probabilistic)  stability and performance requirements can be formulated in terms of robust  (resp., probabilistic)  $\mathcal{D}$-stability analysis, which aims at verifying if (resp., compute the probability that) all the eigenvalues of an uncertain matrix lie in a given region $\mathcal{D}$ of the complex plane.  In this paper, we present   a unified  framework to assess robust and probabilistic
 $\mathcal{D}$-stability of uncertain matrices. Specifically, the contribution of the paper is twofold:}
\begin{enumerate}
\item a novel approach for analysing robust $\mathcal{D}$-stability  of an uncertain matrix $A(\unc)$ is proposed.     The entries of the matrix $A(\unc)$   depend polynomially on an uncertain parameter vector $\unc$, which is assumed to take values in a closed semialgebraic set $\Unc$ described by polynomial constraints.  \DPrev{The only assumption on the stability region $\mathcal{D}$ is that its complement  is a  semialgebraic set (not necessarily convex), described by  polynomial constraints in the complex plane. The addressed problem is quite general and it includes, among others, the analysis of robust nonsingularity,  Hurwitz or Schur stability of a family of matrices with interval, polytopic or $2$-norm bounded perturbations.
}

\item the   $\mathcal{D}$-stability  analysis problem is formulated in a probabilistic framework, by assuming that the uncertain parameters $\unc$ are described by a set of non a-priori specified probability measures. Only the support and some  moments (e.g.,  mean and variance) of the probability measures characterizing the uncertainty $\unc$ are assumed to be known.
This is an approach to robustness, based on    \emph{coherent lower previsions} \cite{walley1991} (also referred to as  \emph{Imprecise Probability})  that has been recently developed in filtering theory \cite{benavoli_2011_journal_c,benavoli2013b,benavoli_2016a}.  Specifically, we seek the ``worst-case probabilistic scenario'', which requires to compute, among all possible  probability measures satisfying the assumptions, the smallest probability of the uncertain matrix $A(\unc)$ to be    $\mathcal{D}$-stable. 
\end{enumerate}

 The latter result  allows us to take into account not only the information about the range of the uncertain parameter $\unc$ (i.e.,  $\unc \in \Unc$),
but also information such as: (1) the nominal value of $\rho$ (e.g., the center of the uncertainty set $\Unc$); (2) the variability of $\rho$ w.r.t.\ its nominal value and so on.
This information is not taken into account in  standard approaches for $\mathcal{D}$-stability analysis, but it allows us to reduce the conservativeness of the obtained results, at the price of guaranteeing $\mathcal{D}$-stability within a given level of probability. 
We can for instance determine if a family of matrices is $\mathcal{D}$-stable with probability $0.90$ or $0.95$ or $0.99$, etc..

To this end, we develop a unified framework for  deterministic (robust) and  probabilistic  $\mathcal{D}$-stability analysis. A semi-infinite linear program is formulated and then relaxed, by exploiting the Lasserre's hierarchy \cite{2001Ala}, into a sequence of (convex) \emph{semidefinite programming} (SDP) problems of finite size.


\subsection{Related works} 
%

 Evaluating the properties of the eigenvalues of a family of  matrices (e.g., robust nonsingularity, maximum real part of the eigenvalues or spectral radius) is an NP-hard problem \cite{PoRo93,Ne93,gurvits2009np}, and there is a vast literature addressing this   research topic. 

 Algorithms for checking  Hurwitz and
Schur stability of  symmetric interval matrices  are proposed in \cite{rohn1989systems,hertz1992extreme,rohn1994positive}, where  it is shown that testing the nonsingularity of symmetric interval matrices requires to calculate  a finite number of determinants, and this number grows  exponentially in the matrix size, limiting the applicability of these methods   to  small scale problems. A branch and bound  algorithm is then proposed in \cite{rohn1996algorithm} to solve larger scale problems. Interval and polytopic matrices are considered in \cite{deif1990adv,qiu1996bounds,alamo2008new,dzhafarov2006stability,peaucelle2000new,bliman2004convex,leite2003improved,chesi2005establishing}. The works in \cite{deif1990adv,qiu1996bounds} derive intervals  where the real eigenvalues of   interval matrices are guaranteed to belong to, \DP{and a vertex result is presented in \cite{alamo2008new} to reduce the computational load in evaluating quadratic stability  of interval matrices. Results in \cite{alamo2008new} can be also used in the case of  multiaffine interval matrix uncertainty.}   Bernstein expansion is used in \cite{dzhafarov2006stability} to check robust nonsingularity of a polytope of real matrices, and 
 sufficient LMI  conditions coming from the Lyapunov theory are derived in \cite{peaucelle2000new,bliman2004convex,leite2003improved,chesi2005establishing} for checking  robust  Hurwitz and
Schur stability   of  matrices with polytopic uncertainty. In \cite{ramos2001less,henrion2003positive,ebihara2005robust,oliveira2007parameter}, less conservative LMI conditions to check robust $\mathcal{D}$-stability  of uncertain polynomial matrices are derived.
 A method based on the \emph{structured
singular value} and on its variant, the \emph{skewed structured singular
value}, is proposed in \cite{KiBr2016} to  analyse the spectrum of uncertain matrices expressed in a \emph{linear fractional representation}.    Numerically efficient algorithms for computing (lower bounds of) the extreme points (e.g., maximum real part and maximum modulus) of the    $\varepsilon$-pseudospectrum of a matrix $A$ are proposed in \cite{guglielmi2011fast} and \cite{guglielmi2013low}, where the $\varepsilon$-pseudospectrum of a given matrix $A$ is defined as the set of   the eigenvalues of the perturbed matrix $A+E$, for all $\|E\|\leq \varepsilon$. Both the Frobenius  and $2$-norm are used to measure the ``amplitude'' of the  perturbation  $E$, and structured perturbations can be also handled. Since lower bounds on the maximum real part and on the maximum modulus of the  $\varepsilon$-pseudospectrum are computed, necessary conditions for robust Hurwitz and
Schur stability of the uncertain matrix $A+E$ can be  derived. NP-hard robust matrix analysis problems are tackled in \cite{vidyasagar2001probabilistic,alamo2009randomized,tempo2012randomized} through \emph{randomized algorithms}, which run in polynomial  time, at the price of providing an erroneous answer (specifically, a false positive) with some probability.   Other contributions addressing robust $\mathcal{D}$-stability analysis, with applications in systems and control theory, can be found in \cite{Barmish94,blondel2000survey,rump2006eigenvalues,vlassis2014polytopic,freitag2014calculating,freitag2014new,petersen2014robust,rostami2015new} and reference therein. 

The list of reviewed works is   far from being exhaustive, but it points out the efforts made by researchers in the last decades to develop methodologies that, in many cases, can be applied to tackle specific  robust $\mathcal{D}$-stability analysis  problems  (e.g., robust nonsingularity,  Hurwitz or Schur stability)   under specific assumptions on the structure of the uncertainty (e.g., interval, polytopic, or 2-norm bounded uncertainty).

\DPrev{In the context of the present paper, it is worth mentioning the works \cite{henrion2012inner} and \cite{hess2016semidefinite}, where two approaches   based on  Lasserre's hierarchy  are proposed  to approximate the stability region of univariate polynomials with uncertain coefficients. These results can be also used   to assess  robust stability of uncertain polynomial matrices. However, unlike the method proposed in this paper, \cite{henrion2012inner} and \cite{hess2016semidefinite} do not consider  the probabilistic scenario  and  a restricted subset of stability regions $\mathcal{D}$ can be handled (for instance, to the best of the authors' knowledge, the complex plane without the imaginary axis cannot be considered as a stability set). Furthermore, a deep Lasserre's hierarchy may be required in \cite{henrion2012inner} and \cite{hess2016semidefinite} to achieve non-conservative results (as discussed in the example reported  in Section \ref{subsec:Henrion}).} 

\subsection{Paper organization}
The paper is organized as follows. The notation  used throughout the paper  is introduced in Section~\ref{Sec:notation}. The   $\mathcal{D}$-stability analysis problem is formally defined in Section~\ref{Sec:PS}, and a unified framework for  deterministic and  probabilistic analysis is provided. The main theorems and results are reported in Section~\ref{sec:Singularity}, where it is shown that the $\mathcal{D}$-stability analysis problem can be formulated as a semi-infinite linear program. Convex relaxation techniques based on the Lasserre's hierarchy~\cite{2001Ala} and aiming at computing the solution of the formulated semi-infinite linear program  are described in Section~\ref{Sec:LMIrelaxation}. Applications of the proposed method are discussed in Section~\ref{Sec:examples}, along  with simulation examples \DPrev{and a comparison with existing approaches for robust $\mathcal{D}$-stability analysis.}     A simple running example is also used throughout the whole paper for illustrative purposes.

\section{Notation} \label{Sec:notation}
Let  us denote with $x_{\textrm{re}}$ and $x_{\textrm{im}}$  the real and imaginary part, respectively, of a complex vector $x$.  
 Let  $z_i$  be the $i$-th component of a vector $z \in \mathbb{R}^{\sizevec{z}}$.  Let $\mathbb{N}$ be the set of natural numbers and $\mathbb{N}_0^{\sizevec{z}}$    the set of $\sizevec{z}$-dimensional vectors with non-negative integer components.

For a given integer $\tau$,   $\setinteger{\tau}{\sizevec{z}}$ is the set defined as $ \left\{\alpha \in \mathbb{N}_0^{\sizevec{z}}: \sum_{i=1}^{\sizevec{z}}\alpha_i \leq \tau \right\}$. 
We will use  the shorthand notation $z^{\alpha}$  for $ z^{\alpha}=z_1^{\alpha_1}\!\cdots \!z_{\sizevec{z}}^{\alpha_{\sizevec{z}}}\!\!=\!\!\prod_{i=1}^{\sizevec{z}}\! z_i^{\alpha_i}$.  Let us denote with      $\setpoly{\tau}{}[z]$ the set of real-valued polynomials in the variable $z \!\in\! \mathbb{R}^{\sizevec{z}}$  with   degree      less  than or equal to  $\tau$, and let $\canbas{\tau}{}(z)$  be the canonical basis of $\setpoly{\tau}{}[z]$, i.e.,  $b_{\tau}(z)\!=\!\left\{z^{\alpha}\right\}_{\alpha \in \setinteger{\tau}{\sizevec{z}}}$\!.
Denote with $\{\bm{g}_{\alpha}\}_{\alpha \in \setinteger{\tau}{\sizevec{z}}}$  the   coefficients of the polynomial $g\in\setpoly{\tau}{}[z]$ 
  in the canonical basis $\canbas{\tau}{}(z)$, i.e., $ g(z) =  \sum_{\alpha \in \setinteger{\tau}{\sizevec{z}}}\!\bm{g}_{\alpha}\!z^{\alpha}$.  In the case $g$ is an $\sizevec{g}$-dimensional vector of polynomials in $\setpoly{\tau}{}[z]$, we denote with $\bm{g}_{i,\alpha}$ the   coefficients of the polynomial $g_i$ 
  in the  basis $\canbas{\tau}{}(z)$.  Let us denote with $deg(g)$ the degree of the polynomial $g$. 
	
	Let $P_{\Zvec}$ be the cumulative distribution function  
of a Borel probability measure $\Pr_{\Zvec}$ on $\mathbb{R}^{\sizevec{z}}$.  
To understand the relationship between  $\Pr_{\Zvec}$ and $P_{\Zvec}$, we can for instance consider $\mathbb{R}$ and in this case we have that $P_{\Zvec}(\Zvec)=\Pr_{\Zvec}(-\infty,\Zvec]$ -- this definition can easily be extended to $\mathbb{R}^{\sizevec{z}}$.
Because of the equivalence between Borel probability measures and cumulative distributions,
hereafter we will use  interchangeably $\Pr_z$ and $P_z$. For an integer $\tau \geq 0$, let  $ m=\{m_{\alpha}\}_{\alpha \in \setinteger{ \tau}{n}}$ be the sequence of moments of a probability measure $\Pr_\Zvec$   on $\mathbb{R}^{\sizevec{z}}$, i.e.,
$m_{\alpha}=\int \Zvec^{\alpha}dP_\Zvec(\Zvec)$.

\section{Problem setting} \label{Sec:PS}

\subsection{Uncertainty description}

Consider an uncertain square real matrix $\MatrixA(\unc)$ of size $\sizevec{a}$, whose entries depend polynomially on an uncertain parameter vector $\unc \in \mathbb{R}^{\sizevec{\unc}}$. The uncertain vector $\unc $ is assumed to belong to a compact semialgebraic uncertainty set $\Unc$, defined as
\begin{align} \label{eqn:DeltaS}
\Unc=\left\{\unc \in \mathbb{R}^{\sizevec{\unc}}: g_i(\unc) \geq 0, \ \ \ i=1,\ldots,\sizevec{g} \right\},
\end{align} 
where $g_i$ are real-valued polynomial functions of $\rho$. 

\begin{example}
\label{ex:1}
Let us introduce a simple example which will be used throughout the paper for   illustrative purposes.
 Let the uncertain  matrix be
\begin{equation}
\label{eq:matex}
 A(\unc) =\left[ \begin{matrix}
  \unc-1 & 0\\
  0 & -1
 \end{matrix}\right],
\end{equation}
with $\unc \in \Unc=[0,1]$. {According to the notation in \eqref{eqn:DeltaS}, the set $\Unc$ is written as:
\begin{equation*}
\Unc=\left\{\unc \in \mathbb{R}: g_1(\unc)\doteq \rho \geq 0, \ \ \ g_2(\unc)\doteq 1-\rho \geq 0 \right\}.
\end{equation*}} \hfill $\blacksquare$
\end{example}

We also assume to have some probabilistic information on the uncertain vector  $\unc$. 
 Specifically, given $\sizevec{f}$ real-valued polynomial functions $f_i$ ($i=1,\ldots,{\sizevec{f}}$) called \textit{generalized polynomial moment functions} (gpmfs) and defined on   $\Unc$, we assume that the  probabilistic information on the    vector  $\unc$ is represented by the expectations of the gpmfs $f_i$, i.e.,
\begin{align} \label{eqn:infogpmfs}
\mathbb{E}\left[f_i\right]=&\int_{\Unc}f_i(\unc)dP_{\unc}(\unc)=\mu_i, \ \  i=1,\ldots,\sizevec{f},
\end{align}
where the integral is a Lebesgue-Stieltjes integral with respect \DP{to} the cumulative distribution function $P_{\unc}$
of a Borel probability measure $\Pr_{\unc}$ on $\Delta$\footnote{The sample space is $\mathbb{R}^{\sizevec{\unc}}$ and we are considering the Borel $\sigma$-algebra. $\Delta$ is assumed to be an element of the $\sigma$-algebra.} and $\mu_i \in \mathbb{R}$ are  finite and known.\footnote{ {Although equality constraints on  the gpmfs $f_i$ are considered in \eqref{eqn:infogpmfs}, the methodology discussed in the paper can   also be used in the  case of inequality constraints.}}

We will always assume that $f_1(\unc)=1$ and since
${\Pr}_{\unc}$ is a probability measure it follows that $\mu_1=1$. In other words, we have 
\begin{equation*} \label{eqn:normP}
\mathbb{E}\left[f_1\right]=\int_{\Delta}dP_{\unc}(\unc)=1,
\end{equation*}
which expresses the fact that ${\Pr}_{\unc}$ is a probability measure with support on $\Delta$:
\begin{equation*}
{\Pr}_{\unc}(\unc \in \Unc) =\int_{\Unc}dP_{\unc}(\unc)=1.
\end{equation*}

Note that the knowledge of the expectation of $\sizevec{f}$ gpmfs $f_i$ is not enough to uniquely define the measures of probability ${\Pr}_{\unc}$, thus we consider the set of all probability measures ${\Pr}_{\unc}$ which are compatible with the information in \eqref{eqn:infogpmfs}: 
\begin{equation} \label{eqn:infogpmfsvec}
\mathcal{P}_{\unc}=\left\{{P}_{\unc}: \int_{\Unc}f_i(\unc)dP_{\unc}(\unc)=\mu_i, ~~i=1,\ldots,\sizevec{f}\right\}.
\end{equation}
With some \DP{abuse} of terminology,   when in the rest of the paper we state that the probability measures ${\Pr}_{\unc}$ belong to $\mathcal{P}_{\unc}$, we actually mean that the corresponding cumulative distribution functions ${P}_{\unc}$ belong to $\mathcal{P}_{\unc}$.

\begin{example}
\label{ex:2}
Let us continue the running Example \ref{ex:1}.
We consider two cases.
\begin{enumerate}
 \item In the first case, the probabilistic information about $\unc$
is expressed by the set of probability measures:
\begin{equation}
 \label{eq:probex1}
\mathcal{P}^{(1)}_{\unc}=\left\{{P}_{\unc}: \int_{0}^1 dP_{\unc}(\unc)=1\right\}.
\end{equation} 
This means that   only   the support $\Unc=[0,1]$ of the probability measures ${P}_{\unc}$ is known.
\item In the second case, the probabilistic information about $\unc$ is expressed by:
\begin{equation}
 \label{eq:probex2}
\mathcal{P}^{(2)}_{\unc}\!=\!\left\{\!{P}_{\unc}: \int_{0}^1 dP_{\unc}(\unc)=1, ~\int_{0}^1 f_2(\unc)\, dP_{\unc}(\unc)=0.5\!\right\}\!\!,
\end{equation} 
with $f_2(\unc)=\unc$. This means that  both the support  and the first moment (i.e., the mean assumed to be $0.5$) of the probability measures ${\Pr}_{\unc}$ are known.
The value of the mean equal to $0.5$ can be interpreted as a knowledge on the nominal value of $\unc$ in the interval $[0,1]$ and, on average, we expect $\unc$ to be equal to $0.5$. Note that we may also assume that other moments of $\unc$  are known; for instance
we may know the variability of $\unc$ w.r.t.\ the mean $0.5$ (i.e., the variance). This case will be considered in the examples reported in Section \ref{Sec:examples}. 
\end{enumerate}
\hfill $\blacksquare$
\end{example}

The  problems reported in the next paragraphs are addressed in this work.

\subsection{Probabilistic $\mathcal{D}$-stability analysis}
A matrix is $\mathcal{D}$-stable if all the eigenvalues belong to a given region $\mathcal{D}$. 
In this paper, we assume that the stability region $\mathcal{D}$  is  an (open) subset  of the complex plane,  whose  complement $\mathcal{D}^c=\mathbb{C} \setminus \mathcal{D}$ (instability region) is  a closed  semialgebraic set described by
\begin{align} \label{eqn:diskcompl}
\mathcal{D}^c=&\left\{\!\lambda \in \mathbb{C}: \! \lambda=\lambda_{\mathrm{re}}+j\lambda_{\mathrm{im}}, \ \ \lambda_{\mathrm{re}}, \lambda_{\mathrm{im}} \! \in \! \mathbb{R}, \right. \\
& \left.  \ d_i(\lambda_{\mathrm{re}},\lambda_{\mathrm{im}})\geq 0,  i=1,\ldots,\sizevec{d} \! \right\}\!, \nonumber 
\end{align} 
with $d_i$ being  real-valued polynomials in the real variables $\lambda_{\mathrm{re}}$ and $\lambda_{\mathrm{im}}$. Note that  $\mathcal{D}$ can be, for instance, the open left half plane, the  unitary disk centered in the origin, or the complex plane without the imaginary axis. 
Therefore, this assumption cover all important cases in stability analysis.

Among all the probability measures belonging to $\mathcal{P}_{\unc}$, we want to find the ``worst-case scenario'' given by the measure of probability ${\Pr}_{\unc}$ which provides the lower  probability that $\MatrixA(\unc)$ has all the eigenvalues in $\mathcal{D}$ or, equivalently, 
the upper  probability $\overline{p}=1-\underline{p}$ that  $\MatrixA(\unc)$ has  at least an eigenvalue in  $\mathcal{D}^c$.
In this way, we can claim that the probability of the matrix $\MatrixA(\unc)$ to be $\mathcal{D}$-stable w.r.t. the uncertainties  $\unc$  is greater than or equal to $\underline{p}$ (equiv. $1-\overline{p}$). 


Formally, we are interested in solving the following eigenvalue location problem.

%
%

\begin{problem}\textbf{[Probabilistic eigenvalue violation]} \label{Prob:eigviolate}\\
Given the uncertain matrix $\MatrixA(\unc)$,  the uncertain parameter vector $\unc$ with (unknown) measure of probability $\Pr_{\unc}$ belonging  to $\mathcal{P}_{\unc}$, and  a stability region $\mathcal{D}$, compute 
\begin{align} \label{eqn:prob2v2}
\overline{p}=\sup_{P_{\unc} \in \mathcal{P}_{\unc}} {\Pr}_{\unc}\left(\Lambda(\MatrixA(\unc)) \nsubseteq \mathcal{D}\right), 
\end{align}
where $\Lambda(\MatrixA(\unc)) $ is the spectrum of the matrix $\MatrixA(\unc)$, or equivalently, 
\begin{align} \label{eqn:prob2v3}
\overline{p}_{}\!=\!\sup_{P_{\unc} \in \mathcal{P}_{\unc}} {\Pr}_{\unc}\left(\lambda_i(\MatrixA(\unc)) \in \mathcal{D}^c\right), \ \mathrm{for \ some \ } i=1,\ldots,\sizevec{a}.
\end{align}
\hfill $\blacksquare$ 
\end{problem}


\begin{example}
\label{ex:3}
Let us again consider the running example.
As a stability region, we consider the open left half-plane
$$
\mathcal{D}=\left\{\lambda \in \mathbb{C} \mid \lambda_{re}< 0 \right\},
$$
whose complement is the semi-algebraic set:
{$$
\mathcal{D}^c=\left\{\lambda \in \mathbb{C} \mid d_1(\lambda_{re})\doteq\lambda_{re}\geq 0 \right\}.
$$}
Since the eigenvalues of the matrix $A(\unc)$ in (\ref{eq:matex}) are $-1$ and $\unc-1$,
the only eigenvalue that can lead to instability is $\unc-1$.
Therefore, in this case the problem   (\ref{eqn:prob2v3}) becomes:
\begin{align} \label{eqn:prob2v3exx}
\overline{p}_{}\!=\!\sup_{P_{\unc} \in \mathcal{P}_{\unc}} {\Pr}_{\unc}\left(\unc-1\geq 0\right),
\end{align}
where we have exploited the fact that $\mathcal{D}^c=\left\{\lambda \in \mathbb{C} \mid \lambda_{re}\geq 0 \right\}$
and $\lambda_{re}=\unc-1$.
Thus, problem  (\ref{eqn:prob2v3}) aims at computing the upper probability that the matrix $A(\unc)$ is not $\mathcal{D}$-stable, given the probabilistic
information on $\unc$ expressed by the set of feasible cumulative distribution functions $\mathcal{P}_{\unc}$. \hfill $\blacksquare$
\end{example}


The following theorem shows that the challenging problem of verifying deterministic (robust)  $\mathcal{D}$-stability of $\MatrixA(\unc)$ is  a
special case of Problem \ref{Prob:eigviolate}.

\begin{theorem}[Deterministic eigenvalue violation] \label{th:DEV}
 In the case the only information on $\unc$ is the support $\Unc$ of the  probability measures $\Pr_{\unc}$ (namely, we only know that 
 $\unc \in \Unc$), the solution $\overline{p}_{}$ of problem \eqref{eqn:prob2v2} can be either $1$ or $0$. Specifically, $\overline{p}_{}=1$ if  $\MatrixA(\unc)$ is not robustly $\mathcal{D}$-stable w.r.t. the uncertainty set $\Unc$,  $\overline{p}_{}=0$ otherwise.
\end{theorem}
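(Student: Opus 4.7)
The plan is to split the argument into the two cases that the theorem already suggests, and to reduce each to a single, almost immediate observation about the probability measures that are feasible in \eqref{eqn:infogpmfsvec} when the only constraint is the normalization $\int_\Unc dP_\unc=1$. In this setting $\mathcal{P}_\unc$ is the full set of Borel probability measures supported on $\Unc$, and in particular every Dirac measure $\delta_{\unc^\star}$ with $\unc^\star\in\Unc$ belongs to $\mathcal{P}_\unc$.

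Before splitting into cases I would note that the set
\[
\mathcal{B}=\bigl\{\unc\in\Unc : \Lambda(\MatrixA(\unc))\nsubseteq\mathcal{D}\bigr\}
\]
is Borel measurable, so that \eqref{eqn:prob2v2} is well posed. This follows because the eigenvalues of $\MatrixA(\unc)$ are the roots of a polynomial whose coefficients depend polynomially (and in particular continuously) on $\unc$, while $\mathcal{D}^c$ is closed and semialgebraic by \eqref{eqn:diskcompl}; hence $\mathcal{B}$ is the preimage of a semialgebraic set under a semialgebraic map and is in particular Borel.

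For the first case, suppose $\MatrixA(\unc)$ is \emph{not} robustly $\mathcal{D}$-stable on $\Unc$. Then by definition there exists $\unc^\star\in\Unc$ with $\Lambda(\MatrixA(\unc^\star))\nsubseteq\mathcal{D}$, i.e.\ $\unc^\star\in\mathcal{B}$. Take $\Pr_\unc=\delta_{\unc^\star}$, which lies in $\mathcal{P}_\unc$ because its only defining constraint is $\int_\Unc dP_\unc=1$. Then $\Pr_\unc(\mathcal{B})=\delta_{\unc^\star}(\mathcal{B})=1$, and since probabilities are bounded above by $1$ the supremum in \eqref{eqn:prob2v2} equals $1$, giving $\overline{p}=1$.

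For the second case, suppose $\MatrixA(\unc)$ is robustly $\mathcal{D}$-stable, i.e.\ $\Lambda(\MatrixA(\unc))\subseteq\mathcal{D}$ for all $\unc\in\Unc$. Then $\mathcal{B}=\emptyset$, and every $P_\unc\in\mathcal{P}_\unc$ satisfies $\Pr_\unc(\mathcal{B})=0$, hence $\overline{p}=0$. Combining the two cases yields the claim. I do not expect any real obstacle here; the only mildly delicate point is the measurability remark above, which I would handle as indicated rather than via a detailed topological argument.
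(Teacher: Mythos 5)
Your proposal is correct and follows essentially the same route as the paper's proof: in both, the key observation is that with only the support constraint, $\mathcal{P}_{\unc}$ contains all probability measures on $\Unc$ (in particular Dirac measures), so a Dirac at a violating parameter gives $\overline{p}=1$ when robust $\mathcal{D}$-stability fails, while robust $\mathcal{D}$-stability forces the violation probability to vanish for every feasible measure. Your added measurability remark is a harmless refinement the paper handles implicitly via the indicator-function formulation.
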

\begin{proof}
First of all observe that
$$
{\Pr}_{\unc}\left(\Lambda(\MatrixA(\unc)) \nsubseteq \mathcal{D} \right)=\int_{\Unc} (1-\mathbb{I}_{\mathcal{D}}\left(\Lambda(\MatrixA(\unc)))\right)dP_{\unc}(\unc),  
$$
where $1-\mathbb{I}_{\mathcal{D}}\left(\Lambda(\MatrixA(\unc))\right)$ is the complement of the indicator function:
\begin{align}
\mathbb{I}_{\mathcal{D}}(\Lambda(\MatrixA(\unc)))=\left\{ \begin{array}{ll}
1 & \textrm{if\ } \Lambda(\MatrixA(\unc))  \subseteq \mathcal{D},\\
0 & \textrm{otherwise},
\end{array}\right.
\end{align}
and $P_{\unc} \in \mathcal{P}_{\unc}$ with
\begin{equation} \label{eqn:infogpmfsvecdet}
\mathcal{P}_{\unc}=\left\{{P}_{\unc}: \int_{\Unc} dP_{\unc}(\unc)=1\right\}.
\end{equation}
$\mathcal{P}_{\unc}$ includes all the probability measures supported by $\Unc$ and so it also includes
atomic measures (Dirac's delta) with support in $\Unc$.
Hence, assume that the matrix $\MatrixA(\unc)$ is not robustly $\mathcal{D}$-stable against $\Unc$. Thus, there exists $\hat{\unc} \in \Unc$ such that $\Lambda(\MatrixA(\hat{\unc})) \nsubseteq \mathcal{D}$. 
Then we can take  ${\Pr}_{\unc}$ equal to the Dirac's delta centred on $\hat{\unc}$ and we have that 
$$
{\Pr}_{\unc}\left(\Lambda(\MatrixA(\hat{\unc})) \nsubseteq \mathcal{D}\right)=1.
$$
Similarly, assume that the matrix $\MatrixA(\unc)$ is $\mathcal{D}$-stable for any $\unc \in \Unc$. Then $(1-\mathbb{I}_{\mathcal{D}}\left(\Lambda(\MatrixA(\unc)))\right)=0$ for any $\unc \in \Unc$. Thus,  
${\Pr}_{\unc}\left(\Lambda(\MatrixA(\hat{\unc})) \nsubseteq \mathcal{D} \right)=0$.
\end{proof}
 
\ \\

\begin{example}
\label{ex:4}
Let us go back to our running example assuming the set of probability measures (\ref{eqn:infogpmfsvecex}). 
Theorem \ref{th:DEV} proves that the robust  $\mathcal{D}$-stability analysis problem can   be reformulated 
in a probabilistic way by writing the deterministic constraint $\unc \in \Unc=[0,1]$
as the equivalent probabilistic constraint:
\begin{equation} \label{eqn:infogpmfsvecex}
{P}^{}_{\unc} \in \mathcal{P}^{(1)}_{\unc}=\left\{{P}_{\unc}: \int_{0}^1 dP_{\unc}(\unc)=1\right\}.
\end{equation}
We can then determine the upper probability that the matrix is  unstable 
by solving the optimization problem:
\begin{align} \label{eqn:prob2v3exxx}
\overline{p}_{}\!=\!\sup_{P_{\unc} \in \mathcal{P}^{(1)}_{\unc}} {\Pr}_{\unc}\left(\unc-1\geq 0\right). 
\end{align} 
The solution of the above optimization problem is given by the probability measure $\Pr_{\unc}=\delta_{(1)}$, i.e., an
atomic measure (Dirac's delta) centered at  $\unc=1$. In fact, this measure
belongs to $\mathcal{P}^{(1)}_{\unc}$ since 
$$
\int_{0}^1 \delta_{(1)}(\unc)d\unc=1,
$$
and therefore is compatible with the probabilistic information on $\unc$.
Moreover, for this measure, we have:
 $$
 {\Pr}_{\unc}\left(\unc-1\geq 0\right)= \int_{0}^1 \mathbb{I}_{[1,\infty)}(\unc) \delta_{(1)}(\unc)d\unc=1,
 $$
 where $\mathbb{I}_{[1,\infty)}(\unc)$ is the indicator function of the set $[1,\infty)$.  Since  $\overline{p}_{}=1$,  we can conclude that there exists at least one value of $\unc$ in $\Unc$
such that the matrix is not  $\mathcal{D}$-stable. 
 \hfill $\blacksquare$
\end{example}

 Theorem \ref{th:DEV} shows that the deterministic $\mathcal{D}$-stability analysis problem is a particular case of probabilistic   $\mathcal{D}$-stability analysis.  Although the result in Theorem \ref{th:DEV} is quite intuitive, it is fundamental to formulate, in a rigorous way, the deterministic and the probabilistic $\mathcal{D}$-stability analysis problem in a unified framework.  In fact, one could erroneously   think that the probabilistic constraint equivalent to  $\unc \in \Unc$ is
\begin{equation}
\label{eq:unif}
\int_{\Unc} \frac{1}{|\Unc|} d\unc=1,
\end{equation} 
where $|\Unc|$ is the Lebesgue measure of $\Unc$,  i.e., ${\Pr}_{\unc}$ is equal to the uniform distribution
on $\Unc$. This is not the case as illustrated in the following example.

\begin{example}
\label{ex:4_v2} 
If in the running example we   translate the (deterministic) information   $\unc \in \Unc$ as in \eqref{eq:unif} (with $|\Unc|=1$),  the probability that the matrix is  unstable would be equal to zero,
since the only value that gives instability ($\rho=1$) has zero Lebesgue measure.
The mistake here is that the uniform distribution is just  one of the possible probability measures with support on $\Unc$. There
are infinite of such distributions and, as discussed above, the one that gives rise to instability is an atomic measure on the 
value $\rho=1$. Thus, the equivalent of the constraint $\unc \in \Unc$ is (\ref{eqn:infogpmfsvecex}) and not (\ref{eq:unif}). \hfill $\blacksquare$
\end{example}


\section{A moment problem for $\mathcal{D}$-stability analysis} \label{sec:Singularity}
As shown in Theorem \ref{th:DEV}, the problem of evaluating (deterministic) robust $\mathcal{D}$-stability of an uncertain matrix $\MatrixA(\unc)$ is a particular  case of    probabilistic  $\mathcal{D}$-stability analysis. However, for the sake of exposition,    we first provide results in the deterministic setting, where only the set $\Unc$ where the uncertainty $\unc$ belongs to is assumed to be known. The probabilistic scenario,   where the expectations of the generalized polynomial moment functions $f_i$  of $\unc$ are known (eq. \eqref{eqn:infogpmfs}), will be   discussed later.


\subsection{Checking determinist $\mathcal{D}$-stability}
The following theorem (based on a proper extension of the results recently proposed  by one of the authors in \cite{Pi2016} to compute the \emph{structured singular value} of  a matrix)   provides necessary and sufficient conditions to check determinist (robust) $\mathcal{D}$-stability of the matrix $A(\unc)$ against the uncertainty set   $\Unc$.

\begin{theorem} \label{theor:loctheorem}
All eigenvalues of the  matrix $A(\unc)$ are   located in the set $\mathcal{D}$ for all uncertainties $\unc \in \Unc$ if and only if  
 the solution of the following (nonconvex) optimization problem is $0$: 
\begin{subequations} \label{eqn1:theloc}
\begin{align}
   & \max_{\begin{array}{l}
               x \in \mathbb{C}^{\sizevec{a}},\unc \in \Unc, \lambda \in \mathbb{C}
             \end{array}
}   \|x\|_2^2\\ \vspace*{-0.2cm}
  &s.t.    \ \ \left(A(\unc)-\lambda I\right)x=0, \ \  \|x\|^2 \leq 1, \ \ \lambda \in \mathcal{D}^c. 
\end{align}
\end{subequations} 
\end{theorem}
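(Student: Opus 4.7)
The plan is to argue the biconditional directly through a simple eigenvalue/eigenvector correspondence, with essentially no analytical machinery needed beyond observing how the feasible set of \eqref{eqn1:theloc} encodes eigenpairs of $A(\unc)$ with eigenvalues in $\mathcal{D}^c$.

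I would first record the trivial observation that $x=0$ is always feasible (take any $\unc \in \Unc$ and any $\lambda \in \mathcal{D}^c$, if the latter is nonempty; otherwise the set of $\mathcal{D}$-unstable matrices is automatically empty and the theorem is trivial). Consequently the optimum of \eqref{eqn1:theloc} is always $\geq 0$, and equals $0$ if and only if no feasible triple $(x,\unc,\lambda)$ has $x\neq 0$. This reduction is what makes the equivalence essentially immediate.

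For the ``only if'' direction, I would assume that $A(\unc)$ is not robustly $\mathcal{D}$-stable, so there exist $\hat\unc \in \Unc$ and an eigenvalue $\hat\lambda$ of $A(\hat\unc)$ with $\hat\lambda \in \mathcal{D}^c$. Pick a corresponding eigenvector and normalize it so that $\|\hat x\|_2 = 1$. Then $(\hat x,\hat\unc,\hat\lambda)$ satisfies $(A(\hat\unc)-\hat\lambda I)\hat x = 0$, $\|\hat x\|^2 \leq 1$, and $\hat\lambda \in \mathcal{D}^c$, so it is feasible for \eqref{eqn1:theloc} and attains objective value $1>0$. Hence the optimum is strictly positive.

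For the converse, suppose the optimum of \eqref{eqn1:theloc} is strictly positive, witnessed by a feasible triple $(x^\star,\unc^\star,\lambda^\star)$ with $\|x^\star\|^2 > 0$, so $x^\star \neq 0$. The equality constraint $(A(\unc^\star)-\lambda^\star I)x^\star = 0$ combined with $x^\star \neq 0$ says precisely that $\lambda^\star$ is an eigenvalue of $A(\unc^\star)$, and $\lambda^\star \in \mathcal{D}^c$ by feasibility, so $A(\unc^\star)$ has an eigenvalue outside $\mathcal{D}$ at some admissible $\unc^\star \in \Unc$. This contradicts robust $\mathcal{D}$-stability, completing the equivalence. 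The only subtle point worth flagging is that the unit-norm constraint $\|x\|^2 \leq 1$ is without loss of generality because eigenvectors may be rescaled freely, and it is needed only to make the supremum finite (indeed equal to $0$ or $1$) rather than $+\infty$; I do not expect any real obstacle beyond this bookkeeping.
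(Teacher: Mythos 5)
Your proposal is correct and follows essentially the same route as the paper's own proof: both directions rest on the observation that nonzero feasible $x$ correspond exactly to eigenpairs of $A(\unc)$ with $\lambda \in \mathcal{D}^c$, with the norm constraint only serving to make the optimum finite (the paper rescales an eigenvector by $\beta$ and invokes $\|x\|^2 \leq 1$, you normalize it directly). The only cosmetic difference is that your "only if''/"if'' labels are swapped relative to the theorem's phrasing, but both implications are established, so the equivalence stands.
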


\noindent \begin{proof}\ \  
 First, the  ``only if'' part is proven. If all the eigenvalues of $A(\unc)$ are located in the set $\mathcal{D}$ (or equivalently, no eigenvalue of $A(\unc)$ belongs to the complement set $ \mathcal{D}^c$), there exists no  value $\lambda \in \mathcal{D}^c$ and $\unc \in \Unc$ which make the matrix $A(\unc)-\lambda I$ singular. Thus,  only the trivial solution $x=0$ satisfies the constraint $\left(A(\unc)-\lambda I\right)x=0$. Therefore, the solution of problem \eqref{eqn1:theloc} is equal to zero. 

The ``if'' part is proven by contradiction. Assume   there exists an uncertainty $\unc \in \Unc$ such that  an eigenvalue $\lambda_i$   of $A(\unc)$ belongs to $\mathcal{D}^c$. Thus, the corresponding eigenvector $x^* \neq 0$ satisfies the constraint  $\displaystyle \left(A(\unc)-\lambda_i I\right)x^*=0$. 
Furthermore, for any $\beta \in \mathbb{C}$, also $x=\beta x^*$  satisfies the constraint  $\displaystyle \left(A(\unc)-\lambda_i I\right)x=0$. 
Thus, the supremum of the $2$ norm of the set of vectors $x$  satisfying $\displaystyle \left(A(\unc)-\lambda_i I\right)x=0$ is infinity. Since  the constraint  $\|x\|^2 \leq 1$ is present in   \eqref{eqn1:theloc}, the   solution of problem \eqref{eqn1:theloc} is $1$, contradicting the hypothesis.
\end{proof}
\ \\

\begin{corollary} \label{theor:loctheorem2}
There exists an uncertainty $\unc \in \Unc$ such that at least an eigenvalue $\lambda_i$ of $\MatrixA(\unc)$ does not belong to $\mathcal{D}$ if and only if the solution of the problem  \eqref{eqn1:theloc} is $1$.
\end{corollary}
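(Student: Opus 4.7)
The plan is to derive this corollary as the direct contrapositive of Theorem \ref{theor:loctheorem}, after first establishing that the optimization problem \eqref{eqn1:theloc} is two-valued, attaining only $0$ or $1$.

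First I would observe that the statement of the corollary is precisely the logical negation of the condition in Theorem \ref{theor:loctheorem}: ``there exists $\unc \in \Unc$ with some eigenvalue of $A(\unc)$ in $\mathcal{D}^c$'' is the negation of ``all eigenvalues of $A(\unc)$ lie in $\mathcal{D}$ for every $\unc \in \Unc$''. By Theorem \ref{theor:loctheorem}, the latter is equivalent to the optimal value of \eqref{eqn1:theloc} being $0$. Hence, if I can show that the optimal value of \eqref{eqn1:theloc} belongs to $\{0,1\}$, the corollary will follow at once: the value is $0$ when robust $\mathcal{D}$-stability holds, and $1$ when it fails.

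To show the two-valued property, I would mirror the argument used in Theorem \ref{theor:loctheorem}. Suppose there exists $\hat{\unc} \in \Unc$ and an eigenvalue $\hat{\lambda} \in \mathcal{D}^c$ of $A(\hat{\unc})$, with associated eigenvector $x^\star \neq 0$. Then the normalized vector $\bar{x} \doteq x^\star / \|x^\star\|_2$ satisfies $(A(\hat{\unc}) - \hat{\lambda} I)\bar{x} = 0$, $\|\bar{x}\|_2^2 = 1 \leq 1$, and $\hat{\lambda} \in \mathcal{D}^c$, so $(\bar{x}, \hat{\unc}, \hat{\lambda})$ is feasible for \eqref{eqn1:theloc} with objective value $1$. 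Since the constraint $\|x\|_2^2 \leq 1$ caps the objective from above by $1$, the maximum is exactly $1$.

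The remaining case is identical to the ``only if'' direction in the proof of Theorem \ref{theor:loctheorem}: if no eigenvalue of $A(\unc)$ lies in $\mathcal{D}^c$ for any $\unc \in \Unc$, then $A(\unc) - \lambda I$ is invertible on the feasible set of $(\unc, \lambda)$, so $x = 0$ is the only vector satisfying the equality constraint, and the objective equals $0$. There is no genuine obstacle here; the only subtlety worth emphasizing is that the normalization step shows the supremum in the ``unstable'' case is not merely positive but exactly $1$, which is what makes the corollary a clean converse rather than just a qualitative negation.
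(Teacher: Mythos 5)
Your proposal is correct and takes essentially the same route as the paper: the paper's proof of the corollary simply invokes Theorem \ref{theor:loctheorem} and its proof, which is exactly what you unpack by noting the two-valued nature of \eqref{eqn1:theloc} and exhibiting a normalized eigenvector as a feasible point with objective value $1$. Your explicit normalization step is a slightly cleaner way of stating what the paper argues via the unbounded scaling $\beta x^*$ capped by the constraint $\|x\|^2 \leq 1$, but the substance is identical.
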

\noindent \begin{proof}\ \
It follows straightforwardly from Theorem \ref{theor:loctheorem} and its proof.
\end{proof}

\begin{example}
In the explanatory example considered so far,  problem \eqref{eqn1:theloc} is:
\begin{subequations} \label{eqn1:thelocexv2}
\begin{align}
   & \max_{\begin{array}{l}
               x \in \mathbb{R}^{2}, \unc \in [0 \ \ 1],  \lambda_{\mathrm{re}} \in \mathbb{R}
             \end{array}
}   \|x\|_2^2\\ \vspace*{-0.2cm}
\nonumber
  s.t.  \\
  & {\small   
 \left[\!\!\!\begin{array}{cc}
	\rho\!-\!1\!-\!\lambda_{\mathrm{re}} & 0 \\
	0 & -1 \!-\!\lambda_{\mathrm{re}}
	\end{array}
	\!\!\!\right] 	\!\!\left[\!\!\!\begin{array}{c}
	 x_1 \\
	 x_2
	\end{array}\!\!\!
	\right]\!=\!\left[\!\!\!\begin{array}{c}
	 0 \\
	 0
	\end{array}\!\!\!
	\right], \   \|x\|^2 \! \leq \! 1\!, \  \lambda_{\mathrm{re}} \!\geq \! 0. }
\end{align}
\end{subequations} 
where we have exploited the fact that, since $A(\rho)$ is a real symmetric matrix, its eigenvalues are real.
A feasible  point of problem \eqref{eqn1:thelocexv2} is   $\rho=1$,  $\lambda_{\mathrm{re}}=0$, and $[x_1 \ \ x_2]^\top=[1 \ \ 0]^\top$. At this point,   $\|x\|^2=1$,
which is the maximum of $\|x\|_2^2$ under the constraint $\|x\|_2^2\leq 1$. Thus, according to Theorem \ref{theor:loctheorem} and Corollary \ref{theor:loctheorem2},  the matrix $A(\unc)$ is not robustly $\mathcal{D}$-stable.  \hfill $\blacksquare$
\end{example}

\subsection{Checking probabilistic $\mathcal{D}$-stability}
Let us now focus on the probabilistic $\mathcal{D}$-stability analysis problem, which aims  at  
computing $\overline{p}_{}$, namely, the upper probability among the probability measures  in $\mathcal{P}_{\unc}{(\mu)}$ of the matrix $\MatrixA(\unc)$ to have at least an eigenvalue in the instability region $\mathcal{D}^c$   (see Problem \ref{Prob:eigviolate}). The following theorem, which can be seen as the probabilistic version of Theorem \ref{theor:loctheorem} and Corollary \ref{theor:loctheorem2}, shows how the computation $\overline{p}_{}$ can be formulated as a moment optimization problem.  

\begin{theorem}
Given the uncertain matrix $\MatrixA(\unc)$, the uncertain parameter vector $\unc$ whose measures of probability $\Pr_{\unc}(\unc)$ are constraint to belong to $\mathcal{P}_{\unc}$, and the instability region $\mathcal{D}^c$,  the upper probability $\overline{p}_{}$ (defined in \eqref{eqn:prob2v3}) of the matrix $\MatrixA(\unc)$ to have at least an eigenvalue in $\mathcal{D}^c$ is given by the solution of the following optimization problem:

{\small \begin{subequations} \label{eqn:probProbloc}
\begin{align}  
\overline{p}_{}=& \sup_{P_{\unc,x,\lambda}} \iiint \|x\|^2 dP_{\unc,x,\lambda}(\unc,x,\lambda) \label{conP:1} \\
  \nonumber
  s.t.  \\
	&  \DPrev{\iiint  dP_{\unc,x,\lambda}(\unc,x,\lambda)=1, \label{conP:supp}} \\
			&  \int_{\unc \in \Unc} \int_{\|x\|^2 \leq 1} \int_{\lambda \in \mathcal{D}^c} dP_{\unc,x,\lambda}(\unc,x,\lambda)=1, \label{conP:4} \\
  &  \int_{\unc \in \Unc} \int_{\|x\|^2 \leq 1} \int_{\lambda \in \mathcal{D}^c}  \!\! \!\!f_i(\unc)dP_{\unc,x,\lambda}(\unc,x,\lambda)\!=\!\mu_i, \  i\!=\!2,\ldots,\sizevec{f}\!, \label{conP:2} \\
&  \iiint\limits_{\left(\MatrixA(\unc)-\lambda I\right)x=0}dP_{\unc,x,\lambda}(\unc,x,\lambda)=1, \label{conP:3}  
\end{align} 
\end{subequations}}

with $P_{\unc,x,\lambda}$ being the joint cumulative distribution function of the variables $(\unc,x,\lambda)$. 
\end{theorem}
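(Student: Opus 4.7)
The plan is to prove the identity by a two-sided inequality, leveraging the deterministic characterization in Theorem \ref{theor:loctheorem}/Corollary \ref{theor:loctheorem2}: $\MatrixA(\unc)$ fails to be $\mathcal{D}$-stable if and only if there exists a triple $(x,\lambda)$ with $\|x\|^2=1$, $\lambda\in\mathcal{D}^c$ and $(\MatrixA(\unc)-\lambda I)x=0$. The moment problem \eqref{eqn:probProbloc} lifts this pointwise characterization to a joint probability measure $P_{\unc,x,\lambda}$ whose marginal on $\unc$ is constrained to belong to $\mathcal{P}_{\unc}$.

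\emph{Inequality $\sup\eqref{eqn:probProbloc}\leq\overline{p}$.} Let $P_{\unc,x,\lambda}$ be feasible. First, constraints \eqref{conP:4} and \eqref{conP:2} imply that the $\unc$-marginal $P_{\unc}$ satisfies the definition \eqref{eqn:infogpmfsvec} of $\mathcal{P}_{\unc}$. Next, whenever a point $(\unc,x,\lambda)$ in the support has $x\neq 0$, constraints \eqref{conP:4} and \eqref{conP:3} force $\lambda$ to be an eigenvalue of $\MatrixA(\unc)$ that lies in $\mathcal{D}^c$; equivalently, $\unc\in E:=\{\unc\in\Unc:\Lambda(\MatrixA(\unc))\not\subseteq\mathcal{D}\}$. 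Combined with $\|x\|^2\leq 1$, this yields the pointwise estimate $\|x\|^2\leq \mathbb{I}_E(\unc)$ on the support of $P_{\unc,x,\lambda}$. Integrating gives $\iiint\|x\|^2\,dP_{\unc,x,\lambda}\leq \Pr_{\unc}(E)\leq\overline{p}$, by definition \eqref{eqn:prob2v2}.

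\emph{Inequality $\sup\eqref{eqn:probProbloc}\geq\overline{p}$.} For any $P_{\unc}\in\mathcal{P}_{\unc}$ I will construct a feasible joint $P_{\unc,x,\lambda}$ whose objective equals $\Pr_{\unc}(E)$. The natural construction is to push $P_{\unc}$ forward under a map $\unc\mapsto(\unc,x(\unc),\lambda(\unc))$, where on $E$ the pair $(x(\unc),\lambda(\unc))$ is a unit eigenvector and an associated eigenvalue in $\mathcal{D}^c$, while on $E^c$ one sets $x(\unc)=0$ and $\lambda(\unc)=\lambda_0$ for any fixed $\lambda_0\in\mathcal{D}^c$, making the algebraic constraint trivially satisfied. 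The support and moment constraints \eqref{conP:supp}--\eqref{conP:3} are then satisfied by construction, and the objective evaluates to $\int_E 1\,dP_{\unc}=\Pr_{\unc}(E)$. Taking the supremum over $P_{\unc}\in\mathcal{P}_{\unc}$ recovers $\overline{p}$.

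\emph{Main obstacle.} The delicate step is the measurability of the eigenpair selection $\unc\mapsto(x(\unc),\lambda(\unc))$ on $E$. Because the entries of $\MatrixA(\unc)$ depend polynomially on $\unc$ and $\mathcal{D}^c$ is closed semialgebraic, the multifunction that assigns to $\unc\in E$ the set of admissible triples has closed graph and nonempty values, so the Kuratowski--Ryll-Nardzewski measurable selection theorem yields a Borel-measurable section. If one prefers to avoid an explicit selection, a Tchakaloff-type approximation argument suffices: any $P_{\unc}\in\mathcal{P}_{\unc}$ can be approximated arbitrarily well by finitely supported measures, for which a pointwise selection is trivial, and this delivers the inequality $\overline{p}\leq\sup\eqref{eqn:probProbloc}$ up to arbitrary $\varepsilon>0$. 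Combining both directions establishes the theorem.
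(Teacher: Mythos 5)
Your proposal is correct and takes essentially the same route as the paper: an upper bound (any feasible joint measure has $\unc$-marginal in $\mathcal{P}_{\unc}$ and, on its support, $\|x\|^2\le \mathbb{I}\{\Lambda(\MatrixA(\unc))\nsubseteq\mathcal{D}\}$, so the objective is at most $\overline{p}$) combined with achievability by concentrating the conditional law of $x$ on unit eigenvectors --- the paper phrases this as a disintegration $dP_x(\cdot\,|\,\unc,\lambda)\,dP_{\unc,\lambda}$ with Dirac conditionals, you as a pushforward under a measurable eigenpair selection. Your explicit treatment of measurability (Kuratowski--Ryll-Nardzewski) tightens a step the paper glosses over; only the optional Tchakaloff aside is loose, since finitely supported approximants matching the moments need not preserve $\Pr_{\unc}(E)$ from below unless $\mathbb{I}_E$ is also among the matched functions.
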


Observe that  (\ref{conP:4}) is just the moment constraint:
$$
\int_{\unc \in \Unc} \int_{\|x\|^2 \leq 1} \int_{\lambda \in \mathcal{D}^c}  \!\! \!\!f_1(\unc)dP_{\unc,x,\lambda}(\unc,x,\lambda)\!=\!\mu_1,
$$
which has been explicited to highlight  the support of $P_{\unc,x,\lambda}$.

\begin{proof}
First, note that the constraints \DPrev{\eqref{conP:supp} and \eqref{conP:4} guarantee}  that  $P_{\unc,x,\lambda}$ is a cumulative distribution function of a probability  distribution ${\Pr}_{\unc,x,\lambda}$, whose marginals ${\Pr}_{\unc}$, ${\Pr}_{x}$ and ${\Pr}_{\lambda}$ are supported by $\Unc$, $\{x \in \mathbb{C}^{\sizevec{a}}: \|x\|^2 \leq 1\}$, and $\mathcal{D}^c$, respectively. Furthermore, the constraint in \eqref{conP:2} guarantees that $P_{\unc} \in \mathcal{P}_{\unc}$, in fact:
{ \begin{align}
 & \int_{\unc \in \Unc} \int_{\|x\|^2 \leq 1} \int_{\lambda \in \mathcal{D}^c} \!\!\! f_i(\unc)dP_{\unc,x,\lambda}(\unc,x,\lambda) \nonumber \\
= & \int_{\Unc}\!\!\! f_i(\unc)dP_{\unc}(\unc)\!=\!\mu_i,  \ i\!=\!2,\ldots,\sizevec{f}. 
\end{align}} 
 Let us now consider the constraint \eqref{conP:3}. The following two situations may occur:
\begin{enumerate}
\item  the pair $\hat{\unc}$ and $\hat{\lambda}$ does not make the matrix $\MatrixA(\hat{\unc})-\hat{\lambda}I$ singular (namely, $\hat{\lambda}$ is not an eigenvalue of  $\MatrixA(\hat{\unc})$). Then, the only value of $x$ in the integral domain $\left(\MatrixA(\unc)-\lambda I\right)x=0$ is $x=0$. Thus, only a joint cumulative probability distribution $P_{\unc,x,\lambda}$ with marginal probability distribution ${\Pr}_x=\delta_{(0)}(x)$  satisfies \eqref{conP:3}. 
\item  the pair $\hat{\unc}$ and $\hat{\lambda}$ makes the matrix $\MatrixA(\hat{\unc})-\hat{\lambda}I$ singular (namely, $\hat{\lambda}$ is an eigenvalue of  $\MatrixA(\hat{\unc})$).  Thus, any left  eigenvector $\hat{x} \neq 0$ of the matrix $\MatrixA(\hat{\unc})$ associated to the eigenvalue $\hat{\lambda}$ satisfies $\left(\MatrixA(\unc)-\lambda I\right)x=0$. Thus, the marginal $dP_x$ of the joint   $dP_{\unc,x,\lambda}$ is not constraint to have its mass centered in $x=0$.  
It depends on the value of  $\unc,\lambda$, i.e., $P_x(\cdot|\unc,\lambda)$,  we can the decompose    $dP_{\unc,x,\lambda}$ as $dP_{\unc,x,\lambda}=dP_x(\cdot|\unc,\lambda)dP_{\unc,\lambda}$.
\end{enumerate}
Based on the considerations above, the support $S_x(\cdot|\unc,\lambda)$ of the  marginal probability distribution ${\Pr}_x(\cdot|\unc,\lambda)$ is either
\begin{align}
\hspace{-2cm} S_x(\cdot|\unc,\lambda)=& \{0\} \\
  & \textrm{\ if\ } \MatrixA(\unc)-\lambda I \textrm{\ is nonsingular}, \nonumber 
\end{align}
or 
\begin{align}
S_x(\cdot|\unc,\lambda)=&\left\{x: \|x\|^2\leq 1, \ \left(\MatrixA(\unc)-\lambda I\right)x=0 \right\} \label{eqn:Sx} \\
  &  \textrm{ if\ } \MatrixA(\unc)-\lambda I \textrm{\ is singular}. \nonumber 
\end{align}
Let us rewrite the joint  $dP_{\unc,x,\lambda}$ as $dP_{\unc,x,\lambda}=dP_x(\cdot|\unc,\lambda)dP_{\unc,\lambda}$ and let us split the objective function in \eqref{conP:1} 
as:
\begin{subequations} \label{eqn:splittot}
\begin{align}
 &\iiint \|x\|^2 dP_{\unc,x,\lambda}(\unc,x,\lambda)  \\
              =  & \iiint\displaylimits_{\scriptsize \begin{array}{c}A(\unc)-\lambda I \\
							\textrm{nonsingular} \end{array}} \|x\|^2 dP_{x}(x|\unc,\lambda)dP_{\unc,\lambda}(\unc,\lambda) + \label{eqn:split1} \\
               +  & \iiint\displaylimits_{\scriptsize \begin{array}{c}A(\unc)-\lambda I \\
							\textrm{singular} \end{array}} \|x\|^2 dP_{x}(x|\unc,\lambda)dP_{\unc,\lambda}(\unc,\lambda). \label{eqn:split2} 							
\end{align}
\end{subequations}
Let us consider the term \eqref{eqn:split1}. Based on the above considerations, for any   probability measure satisfying the constraints \eqref{conP:4}-\eqref{conP:3}, we have:
\begin{subequations} \label{eqn:objzero}
\begin{align} 
& \iiint\displaylimits_{\scriptsize \begin{array}{c}A(\unc)-\lambda I \\
							\textrm{nonsingular} \end{array}} \! \!\! \!\!\! \|x\|^2 dP_{x}(x|\unc,\lambda)dP_{\unc,\lambda}(\unc,\lambda)   \\
						=  &	\iiint \! \! \|x\|^2 \delta_{(0)}(x)dx dP_{\unc,\lambda}(\unc,\lambda)\!=\!0. 
\end{align}   
\end{subequations} 
Let us consider the term \eqref{eqn:split2}.  For any   probability measure satisfying the constraints \eqref{conP:4}-\eqref{conP:3}, we have:
\begin{subequations}
\begin{align}
& \iiint\displaylimits_{\scriptsize \begin{array}{c}A(\unc)-\lambda I \\
							\textrm{singular} \end{array}} \|x\|^2 dP_{x}(x|\unc,\lambda)dP_{\unc,\lambda}(\unc,\lambda)   \label{eqn:term1}   \\
 \leq &   \iiint\displaylimits_{\scriptsize \begin{array}{c}A(\unc)-\lambda I \\
							\textrm{singular} \end{array}}  dP_{\unc,\lambda}(\unc,\lambda)	= \mathrm{Pr}_{\unc}\left(\Lambda\left(A(\unc)\right) \nsubseteq \mathcal{D}\right)=\overline{p},			\label{eqn:term2} 			
\end{align}
\end{subequations}
where the inequality comes from  the fact that the support of ${\Pr}_{x}(x|\unc,\lambda)$ is bounded by $\|x\|^2 \leq 1$ (see eq.  \eqref{eqn:Sx}). Among all the feasible conditional distributions ${\Pr}_{x}(\cdot|\unc,\lambda)$, which are constrained to have  support $S_x$ in \eqref{eqn:Sx}, let us consider the Dirac's function $\delta_{(\hat{x})}$ centered at $\hat{x}$, with $\hat{x}: \|\hat{x}\|^2=1$. For such a distribution, the term \eqref{eqn:term1} is equal to:
 
\begin{align}  \label{eqn:term3} 
& \iiint\displaylimits_{\scriptsize \begin{array}{c}A(\unc)-\lambda I \\
							\textrm{singular} \end{array}} \|x\|^2 dP_{x}(x|\unc,\lambda)dP_{\unc,\lambda}(\unc,\lambda)  \nonumber \\
							=  & \iiint\displaylimits_{\scriptsize \begin{array}{c}A(\unc)-\lambda I \\
							\textrm{singular} \end{array}} \! \! \|x\|^2 \delta_{(\hat{x})}(x)dx dP_{\unc,\lambda}(\unc,\lambda) \nonumber \\
							=  & \iiint\displaylimits_{\scriptsize \begin{array}{c}A(\unc)-\lambda I \\
							\textrm{singular} \end{array}}  dP_{\unc,\lambda}(\unc,\lambda)	= \mathrm{Pr}_{\unc}\left(\Lambda\left(A(\unc)\right) \nsubseteq \mathcal{D}\right)=\overline{p}.
\end{align} 
Thus, from \eqref{eqn:term3}  and the upper bound  in \eqref{eqn:term2}, we have that, at the optimum, 
 \begin{align} \label{eqn:fintot}
& \iiint\displaylimits_{\scriptsize \begin{array}{c}A(\unc)-\lambda I \\
							\textrm{singular} \end{array}} \!\!\!\! \! \!\!\|x\|^2 dP_{x}(x|\unc,\lambda)dP_{\unc,\lambda}(\!\unc,\lambda\!) \! = \! \mathrm{Pr}_{\unc}\left(\!\Lambda\left(A(\unc)\!\right) \! \nsubseteq \! \mathcal{D}\right)\!=\!\overline{p}.
\end{align}		
By combining 	eq. \eqref{eqn:splittot} with  the conditions \eqref{eqn:objzero} and		\eqref{eqn:fintot}, the theorem follows.
 \end{proof}
\ \\

The intuitive explanation behind the formulation of problem \eqref{eqn:probProbloc} is the following.
 According to Theorem \ref{theor:loctheorem} and Corollary \ref{theor:loctheorem2}, when the optimum of the deterministic problem \eqref{eqn1:theloc} is achieved,   $\|x\|^2=1$ if  $\Lambda\left(\MatrixA(\unc)\right) \nsubseteq  \mathcal{D}$, $0$ otherwise.  
Thus, when the information on $\unc$ is modeled in terms of probability measures,  $\|x\|^2$ becomes a uncertain variable which takes the values:
\begin{align*}
\|x\|^2=\left\{ \begin{array}{ll}
0 & \ \mathrm{if \ } \Lambda\left(\MatrixA(\unc)\right) \subseteq  \mathcal{D},\\ 
1 & \ \mathrm{if \ } \Lambda\left(\MatrixA(\unc)\right) \nsubseteq  \mathcal{D}.
\end{array} \right.
\end{align*}
Thus, the expected value of $\|x\|^2$ (namely, the objective function in \eqref{eqn:probProbloc}) coincides with  ${\Pr}_x(\|x\|^2=1)$, which in turn provides  $\textrm{Pr}_{\unc}\left(\Lambda\left(\MatrixA(\unc)\right) \nsubseteq  \mathcal{D}\right)$. 

The constraints in \eqref{conP:4} and \eqref{conP:3} are simply the ``probabilistic version'' of the determinist constraints in \eqref{eqn1:theloc}, and they are used to describe  the support of the   probability measures ${\Pr}_{\unc,x,\lambda}$.   The constraint \eqref{conP:2} includes the information in \eqref{eqn:infogpmfs} on the (generalized) moments of the probability measures  ${\Pr}_{\unc}$, i.e., ${P}_{\unc} \in \mathcal{P}_{\unc}$.

{\begin{example} \label{ex:Probc2}
Let us continue the explanatory example, and consider the case where the  probabilistic information on $\unc$ is expressed by the set $\mathcal{P}^{(2)}_{\unc}$ (eq. \eqref{eq:probex2}).  Then, problem \eqref{eqn:probProbloc} is given by:
\begin{subequations} \label{eqn:probProblocex4}
\begin{align}  
\overline{p}_{}=& \sup_{P_{\unc,x,\lambda}} \iiint \|x\|^2 dP_{\unc,x,\lambda}(\unc,x,\lambda)  \\
  \nonumber
  s.t. \\
	 & \DPrev{\int  dP_{\unc,x,\lambda}(\unc,x,\lambda)=1, } \\
  			&  \int_{\unc \in [0  \ 1]} \int_{\|x\|^2 \leq 1} \int_{\lambda_{\mathrm{re}} \geq 0} dP_{\unc,x,\lambda}(\unc,x,\lambda)=1,  \label{eq:conss4} \\
  &  \int_{\unc \in [0  \ 1]} \int_{\|x\|^2 \leq 1} \int_{\lambda_{\mathrm{re}} \geq 0}  \unc dP_{\unc,x,\lambda}(\unc,x,\lambda)=0.5, \label{eq:conss2}   \\
	     &  \iiint\limits_{\left(\MatrixA(\unc)-\lambda_{\mathrm{re}} I\right)x=0}dP_{\unc,x,\lambda}(\unc,x,\lambda)=1.  \label{eq:conss3}   
\end{align} 
\end{subequations}
Because of the constraint \eqref{eq:conss4},  the joint    distribution $\Pr_{\unc,x,\lambda}$ is supported by 
\begin{equation*}
\left\{(\unc,x,\lambda_{\mathrm{re}}):  \ \ \unc \in [0 \ 1], \ \|x\|^2 \leq 1, \ \lambda_\mathrm{re}\geq 0 \right\}.
\end{equation*}
We remind that $A(\unc)$ is unstable if and only if $\unc=1$. For this value of $\unc$, $A(\unc)$ has an eigenvalue in zero.
Let us rewrite $P_{\unc,x,\lambda}$ as $P_x(\cdot|\unc,\lambda_\mathrm{re})P_{\unc,\lambda_\mathrm{re}}$.
Then, because of \eqref{eq:conss3}, the conditional marginal distribution $P_x(\cdot|\unc,\lambda_\mathrm{re})$ is supported by:
\begin{align*}
\left\{x\!:\!   \|x_1\|^2 \leq 1, \ \DP{x_2=0} \! \right\} \ & \textrm{if\ } \unc=1 \textrm{\ and\ } \lambda_\mathrm{re}=0, \\
\{0\} \ &   \textrm{if\ } \unc \neq 1  \textrm{\ or\ }\lambda_\mathrm{re} \neq 0.
\end{align*}
Thus, at the optimum, the objective function of problem \eqref{eqn:probProblocex4}  is given by
\begin{equation}  \label{eq:exobjopt}
\int_{\rho=1} \int_{\lambda_\mathrm{re}=0}dP_{\unc,\lambda}(\unc,\lambda).  
\end{equation}
Among all  the probability measures $\Pr_{\unc,\lambda}$  satisfying  the moment constraint \eqref{eq:conss2} on the marginal distribution  $\Pr_{\unc}$ and the constraints \eqref{eq:conss4}-\eqref{eq:conss3}, the one maximizing \eqref{eq:exobjopt} is given by 
\begin{equation} \label{eqn:Prex}
{\Pr}_{\unc,\lambda}(\unc,\lambda)=\left(0.5\delta_{(0)}(\unc)+0.5\delta_{(1)}(\unc)\right)\delta_{(0)}(\lambda_\mathrm{re}).
\end{equation}
Thus, the maximum value of the objective function in \eqref{eq:exobjopt} is given by: 
\begin{align*}
&\int_{\rho=1} \int_{\lambda_\mathrm{re}=0}dP_{\unc,\lambda}(\unc,\lambda) = \\
&\int_{\rho=1} \left(0.5\delta_{(0)}(\unc)+0.5\delta_{(1)}(\unc)\right)d\unc \int_{\lambda_\mathrm{re}=0}\delta_{(0)}(\lambda_\mathrm{re}) d\lambda_\mathrm{re}=0.5.
\end{align*}
Therefore, by exploiting the information on the mean we can reduce the upper probability of instability from $1$ to $0.5$.
 \hfill $\blacksquare$
\end{example} 
}

\section{Solving moment problems through SDP relaxations} \label{Sec:LMIrelaxation}
Note that, in  problem \eqref{eqn:probProbloc}: (i) the decision 
variables are the amount of non-negative mass ${\Pr}_{\unc,x,\lambda}$ assigned to
each point $(\unc,x,\lambda)$, (ii) the objective function and the constraints
 are linear in the optimization variables $P_{\unc,x,\lambda}$.
Therefore, \eqref{eqn:probProbloc} is a \emph{semi-infinite linear
program}, with a finite number of constraints but with  infinite number of decision variables. In this section, we show how to use   results from the theory-of-moments  relaxation proposed by Lasserre in \cite{2001Ala}, and concerning the characterization
of those sequences that are sequence of moments of some probability measures,   to relax the semi-infinite linear programming  problem \eqref{eqn:probProbloc} into a hierarchy  of \emph{semidefinite programming} (SDP) problems of finite dimension. 

Let us first introduce the augmented variable vector $\Zetavec~=~\left[x_{\textrm{re}}^\top \ \ x_{\textrm{im}}^\top  \ \ \unc^\top \ \ \lambda_{\textrm{re}} \ \ \lambda_{\textrm{im}}\right]^\top \in \mathbb{R}^{\sizevec{\Zetavec}}$ (with $\sizevec{\Zetavec}~=~2\sizevec{a}+{\sizevec{\unc}}+2$)  and,  with some abuse of notation, let us define $h(\Zetavec)=\|x\|^2$ and $\tilde{f}(\Zetavec)=f(\unc)$.   Problem   \eqref{eqn:probProbloc}  can be then rewritten in terms of the augmented variable $\Zetavec$ and the cumulative  distribution function $P_{\Zetavec}$ as

\begin{subequations} \label{eqn:probProbsingZetavec}
\begin{align}  
\overline{p}=& \sup_{P_{\Zetavec}} \int h(\Zetavec) dP_\Zetavec(\Zetavec)\\
  \nonumber
  s.t. \\
	      &  \int dP_{\Zetavec}(\Zetavec)=1, \\
  &  \int \tilde{f}_i(\Zetavec)dP_{\Zetavec}(\Zetavec)=\mu_i, \ i=2,\ldots,\sizevec{f},\\
	      &  \int_{\Zvecsupp}dP_{\Zetavec}(\Zetavec)=1, 
\end{align} 
\end{subequations}
 where $\Zvecsupp$ defines the support of the probability measure $\Pr_{\Zvec}$. Thus, based on the definition of the sets $\Unc$ (eq. \eqref{eqn:DeltaS}) and $\mathcal{D}^c$ (eq. \eqref{eqn:diskcompl}), 
the set $\Zvecsupp$ is described by:
\begin{align} \label{eqn:Zdesc}
\Zvecsupp=& \left\{\Zetavec=\left[x_{\textrm{re}}^\top \ \ x_{\textrm{im}}^\top \ \ \unc^\top \ \lambda_{\textrm{re}} \ \ \lambda_{\textrm{im}} \right]^\top: \right. \nonumber \\
      & \ g_i({\unc}) \geq 0, \  \ i=1,\ldots,\sizevec{g}, \nonumber  \\
			& \ d_i(\lambda_{\textrm{re}},\lambda_{\textrm{im}}) \geq 0, \  \ i=1,\ldots,\sizevec{d}, \nonumber  \\
		  &   \left(A(\unc)- \lambda_{\textrm{re}} I\right) x_{\textrm{re}}+ \lambda_{\textrm{im}} x_{\textrm{im}}=0,   \nonumber  \\
			& \ \left(A(\unc)- \lambda_{\textrm{re}} I\right) x_{\textrm{im}}- \lambda_{\textrm{im}} x_{\textrm{re}}=0,  \nonumber	\\
&    \ \left\|x_{\textrm{re}}\right\|^2+\left\|x_{\textrm{im}}\right\|^2 \leq 1 \left. \right\}.
\end{align}
In order to compact the notation, we will rewrite the set $\Zvecsupp$  as:
\begin{align} \label{eqn:Zdescv2}
\Zvecsupp=\left\{\Zetavec \in \mathbb{R}^{\sizevec{z}}: \right. & \ q_j(\Zetavec) \geq 0, \ \ j=1,\ldots,\sizevec{q}  \left. \right\}, 
\end{align}
with $q_j(\Zetavec)$ being real-valued polynomial functions in $\Zetavec$, properly defined  based on the description of $\Zvecsupp$ in \eqref{eqn:Zdesc}.

\begin{example}
Since in the explanatory example considered so far  $A(\unc)$ is a real symmetric matrix, its eigenvalues are real, and thus we considered an augmented variable vector $\Zetavec$:
\begin{equation}
\Zetavec=[\unc \  \lambda_{\textrm{re}} \ x_1 \ x_2]^\top \in \mathbb{R}^4. 
\end{equation}
The objective function $h(z)$ is $h(z)=z_3^2+z_4^2$, and the components of the vector-valued function $\tilde{f}(\Zetavec)$ defining the constraints on the moments is $\tilde{f}_1(z)=1$ and $\tilde{f}_2(z)=z_1$. According to the description in \eqref{eqn:Zdescv2}, the set $\Zvecsupp$ defining the support of the probability measure $\Pr_{\Zvec}$ is given by:
\begin{align} \label{eqn:Zdescex}
\Zvecsupp=& \left\{\Zetavec= [\unc \  \lambda_{\textrm{re}} \ x_1 \ x_2]^\top: \right. \nonumber \\
      & q_1(\Zetavec) \doteq z_1 \geq 0, \ \ q_2(\Zetavec) \doteq 1-z_1 \geq 0,  \nonumber \\
			& q_3(\Zetavec) \doteq  z_2 \geq 0, \nonumber  \\
		  &  q_4(\Zetavec) \doteq  (z_1-1)z_3 \geq 0, \   q_5(\Zetavec) \doteq  -(z_1-1)z_3 \geq 0, \nonumber  \\
		  &  q_6(\Zetavec) \doteq  z_4 \geq 0, \   q_7(\Zetavec) \doteq  -z_4 \geq 0, \nonumber  \\
&    q_8(\Zetavec) \doteq 1-z_3^2-z_4^2 \geq 0 \left. \right\}. \nonumber 
\end{align} \hfill $\blacksquare$\\
\end{example}

For an integer  $\displaystyle \tau \in \mathbb{N}:\  \tau \geq \tilde{\tau}$,  with 
\begin{equation} \label{eqn:deltaunder}
\tilde{\tau}= \max \left\{ 1,\max_{i=1,\ldots,\sizevec{\tilde{f}}}\left\lceil \frac{deg(\tilde{f}_i)}{2}  \right\rceil,  \max_{j=1,\ldots,\sizevec{q}}\left\lceil \frac{deg(q_j)}{2}  \right\rceil  \right\},
\end{equation}
let us rewrite  $h(\Zetavec) \in \setpoly{2\tau}{}[\Zetavec]$ and each component $\tilde{f}_i(\Zetavec) \in \setpoly{2\tau}{}[\Zetavec] $ of the vector-valued function $\tilde{f}(\Zetavec)$ as 

\begin{equation} \label{eqn:hf} h(\Zetavec)=\sum_{\alpha \in \setinteger{2 \tau}{\sizevec{\Zetavec}}}\bm{h}_{\alpha}\Zetavec^{\alpha}, \ \ \ \ \tilde{f}_i(\Zetavec)=\sum_{\alpha \in \setinteger{2 \tau}{\sizevec{\Zetavec}}}\bm{\tilde{f}}_{i,\alpha}\Zetavec^{\alpha}, \end{equation}
\DP{where, according to the notation introduced in Section \ref{Sec:notation},  $\bm{h}_{\alpha}$ (resp. $\bm{\tilde{f}}_{i,\alpha}$) are the  coefficients of the polynomial $h(\Zetavec)$ (resp. $\tilde{f}_i(\Zetavec)$). Based on eq. \eqref{eqn:hf}, we can write
\begin{align*}  
\int h(\Zetavec) dP_{\Zetavec}(\Zetavec)&=\int \left(\sum_{\alpha \in \setinteger{2 \tau}{\sizevec{\Zetavec}}}\!\! \bm{h}_{\alpha}\Zetavec^{\alpha}\right) dP_{\Zetavec}(\Zetavec)=\sum_{\alpha \in \setinteger{2 \tau}{\sizevec{z}}}\!\! \bm{h}_{\alpha}m_{\alpha},
\end{align*}
where $m_{\alpha}$ are the moments of the probability measure  $\Pr_{\Zetavec}$, i.e.,
\begin{equation*}  
m_{\alpha}=\int z^{\alpha} dP_{\Zetavec}(\Zetavec),
\end{equation*}
as introduced in Section \ref{Sec:notation}. Similar considerations hold  for the polynomial $\bm{\tilde{f}}_{i}(\Zetavec)$.

 Thus, solving problem \eqref{eqn:probProbsingZetavec} is equivalent to solve:
\begin{align}  \label{eqn:probProbsingZvecmom}
\overline{p}=& \sup_{m=\{m_{\alpha}\}_{\alpha \in \setinteger{2 \tau}{\sizevec{z}}}}\sum_{\alpha \in \setinteger{2 \tau}{\sizevec{z}}}\bm{h}_{\alpha}m_{\alpha}\\
  \nonumber
  s.t. \\
  &  \sum_{\alpha \in \setinteger{2 \tau}{\sizevec{z}}}\bm{\tilde{f}}_{i,\alpha}m_{\alpha}=\mu_i, \ \ i=2,\ldots,\sizevec{f}, \nonumber\\
				& m \textrm{\ is a sequence of moments generated by a }	\nonumber \\	
				& \ \ \ \  \textrm{probability measure with  support on } \Zvecsupp.  \nonumber
\end{align} 
Comparing \eqref{eqn:probProbsingZetavec} and \eqref{eqn:probProbsingZvecmom} is evident that now the optimization variables are the moments $m_{\alpha}$ (real numbers), where   the constraint ``$\Pr_{\Zetavec}$ is a probability measure on $\Zvecsupp$'' has been replaced by ``$m$ is a sequence
 of moments generated by a probability measure with  support on $\Zvecsupp$''.}

From a straightforward application of Lasserre's hierarchy (see \cite{2001Ala} and \cite[Sec. 4.1.5]{2009Ala}),  necessary conditions for the sequence   $\displaystyle m=\{m_{\alpha}\}_{\alpha \in \setinteger{2 \tau}{\sizevec{z}}}$  to be a sequence of moments generated by a probability measure ${\Pr}_{\Zvec}(\Zvec)$ with support on $\Zvecsupp$ can be derived. Before discussing the application  Lasserre's hierarchy to  problem \eqref{eqn:probProbsingZetavec}, let us introduce the following notation. 

For   a generic polynomial function $g \in \setpoly{\tau}{}[\Zvec]$,  let us define the map $L_{m}(g)$ as:
\begin{align}
g \mapsto L_{m}(g)\!=\!\!\!  \int \! \! g(z)dP_\Zvec(\Zvec)\!=\! \!\!\! \!\!  \sum_{\alpha \in \setinteger{\tau}{\sizevec{z}}} \!\!\! \bm{g}_{\alpha} \!\! \int \! \! \Zvec^{\alpha}dP_\Zvec(\Zvec)\! =\!\!\!\!\!  \sum_{\alpha \in \setinteger{ \tau}{\sizevec{z}}}\!\!\!\! \bm{g}_{\alpha}m_{\alpha}. \nonumber 
\end{align}
 

Let us define the so-called \emph{moment matrix} $M_{\tau}(m)$ truncated to order $\tau$ as
\begin{align} \label{eqn:moment}
M_\tau(m)=\int \canbas{\tau}{}(\Zvec)\canbas{\tau}{\top}(\Zvec) dP_\Zvec(\Zvec)=  L_{m}(\canbas{\tau}{}(\Zvec)\canbas{\tau}{\top}(\Zvec)),
\end{align}
\DP{with $\canbas{\tau}{}(z)$ defined in  Section \ref{Sec:notation} and where the operator $L_{m}$ is applied entry-wise to the matrix $\canbas{\tau}{}(\Zvec)\canbas{\tau}{\top}(\Zvec)$.}

Let us also define the  so-called truncated \emph{localizing matrix} $M_{\tau}(gm)$ of order $\tau$ associated with the polynomial $g$ as:
\begin{align} \label{eqn:locmatrix}
M_{\tau}(gm)=\int \!\! g(\Zvec)\canbas{\tau}{}(\Zvec)\canbas{\tau}{\top}(\Zvec) dP_\Zvec(\Zvec)=L_{m}(g(\Zvec)\canbas{\tau}{}(\Zvec)\canbas{\tau}{\top}(\Zvec)).
\end{align}

Based on the definition of the moment and localizing matrices, the following theorem, which is the basis for the Lasserre's hierarchy \cite{2001Ala}, can be  stated.    

\begin{theorem}  \label{Prop:Momentmatrix} {\normalfont \DP{\textbf{\cite[Sec. 4.1.5]{2009Ala}}}}  If $\displaystyle m=\{m_{\alpha}\}_{\alpha \in \setinteger{2 \tau}{\sizevec{z}}}$ is a sequence of moments generated  by a probability measure ${\Pr}_{\Zvec}(\Zvec)$ supported by  $\Zvecsupp$, then 
\begin{align} \label{eqn:Mrel}
M_\tau(m) \! \succeq \! 0, \  m_{\small 0\cdots 0} \! = \! 1,  \  M_{\tau-\left\lceil \frac{deg(q_j)}{2}  \right\rceil}(q_j m)  \! \succeq \! 0,   j=1,\ldots \sizevec{q},
\end{align}
for any integer $\tau \geq \tilde{\tau}$, with $\tilde{\tau}$ defined in \eqref{eqn:deltaunder}.  
\hfill $\blacksquare$
\end{theorem}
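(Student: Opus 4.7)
The plan is to prove each of the three claims by directly interpreting the moment and localizing matrices as expectations of outer-product or rank-one PSD quantities, and then exploiting the fact that integrating a pointwise non-negative matrix-valued function against a probability measure preserves positive semidefiniteness. Since the theorem is attributed to \cite{2009Ala}, I will essentially recount that standard argument in the present notation.

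First I would dispatch the normalization $m_{0\cdots 0}=1$. By definition $m_{0\cdots 0} = \int z^{(0,\ldots,0)}\,dP_{\Zvec}(\Zvec) = \int dP_{\Zvec}(\Zvec)$, and since $\Pr_{\Zvec}$ is a probability measure supported on $\Zvecsupp$ we have $\int dP_{\Zvec}(\Zvec)=\int_{\Zvecsupp}dP_{\Zvec}(\Zvec)=1$. Next I would establish $M_\tau(m)\succeq 0$. Given an arbitrary real vector $v$ of appropriate dimension, I would expand
\begin{equation*}
v^\top M_\tau(m)\,v = v^\top\!\!\left(\int \canbas{\tau}{}(\Zvec)\canbas{\tau}{\top}(\Zvec)\,dP_\Zvec(\Zvec)\right)\!v = \int \bigl(v^\top \canbas{\tau}{}(\Zvec)\bigr)^{2}\,dP_\Zvec(\Zvec) \geq 0,
\end{equation*}
since the integrand is pointwise non-negative and $\Pr_{\Zvec}$ is a (non-negative) measure. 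The interchange of $v^\top(\cdot)v$ with $\int$ is justified entry-wise, since each entry $M_\tau(m)_{\alpha,\beta}=m_{\alpha+\beta}$ is finite (this uses that the polynomial moments up to order $2\tau$ exist; the condition $\tau\ge\tilde\tau$ in \eqref{eqn:deltaunder} ensures all the relevant moments are well defined).

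For the localizing matrices, I would argue analogously but exploiting the sign of $q_j$ on the support. For any vector $v$ of the dimension of $\canbas{\tau-\lceil deg(q_j)/2\rceil}{}(\Zvec)$,
\begin{equation*}
v^\top M_{\tau-\lceil deg(q_j)/2\rceil}(q_j m)\,v = \int q_j(\Zvec)\,\bigl(v^\top \canbas{\tau-\lceil deg(q_j)/2\rceil}{}(\Zvec)\bigr)^{2}\,dP_\Zvec(\Zvec).
\end{equation*}
Because $\Pr_\Zvec$ is supported on $\Zvecsupp$, the integral reduces to one over $\Zvecsupp$, and by the defining condition $q_j(\Zvec)\ge 0$ on $\Zvecsupp$ (see \eqref{eqn:Zdescv2}) the integrand is pointwise non-negative, hence the integral is non-negative. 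The choice of truncation order $\tau-\lceil deg(q_j)/2\rceil$ is exactly what guarantees that $q_j(\Zvec)\,\bigl(v^\top\canbas{\tau-\lceil deg(q_j)/2\rceil}{}(\Zvec)\bigr)^{2}$ has total degree at most $2\tau$, so every entry of the localizing matrix involves only moments $m_\alpha$ with $|\alpha|\le 2\tau$, which is the data available in the truncated sequence $m$.

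The ``hard part'' here is essentially notational rather than conceptual: one must carefully check that the entry-wise interchange of the linear map $L_m$ with the outer product $\canbas{\tau}{}(\Zvec)\canbas{\tau}{\top}(\Zvec)$ (resp. its weighted version by $q_j$) is legitimate under the truncation order, and that the resulting degrees stay within $2\tau$ so that all moments invoked actually exist and are furnished by the sequence $m$. Once this bookkeeping is in place, the three claims follow immediately from the non-negativity of the integrands, and the theorem reduces to the standard Lasserre argument from \cite[Sec.\ 4.1.5]{2009Ala}.
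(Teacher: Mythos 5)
Your proposal is correct and follows essentially the same argument as the paper: the normalization follows from $\Pr_\Zvec$ being a probability measure, and the positive semidefiniteness of both the moment and localizing matrices is obtained by evaluating the quadratic form $v^\top M(\cdot) v$ as an integral of a pointwise non-negative function (a squared polynomial, weighted by $q_j\ge 0$ on $\Zvecsupp$ in the localizing case). Your additional remarks on degree bookkeeping and the interchange of the quadratic form with the integral are a harmless elaboration of the same standard Lasserre argument.
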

\begin{proof}
  First, observe that if $\displaystyle m=\{m_{\alpha}\}_{\alpha \in \setinteger{2 \tau}{\sizevec{z}}}$ is a sequence of moments generated  by a probability measure ${\Pr}_{\Zvec}$ supported by  $\Zvecsupp$, then:
\begin{align*}
 m_{\alpha}=\int \Zvec^{\alpha} dP_{\Zvec}(\Zvec), \ \ m_{\small 0\cdots 0}=\int_{\Zvecsupp}  dP_{\Zvec}(\Zvec)=1.
\end{align*}
Based on the definition of the moment matrix  $M_{\tau}(m)$ (see \eqref{eqn:moment}),   for any real vector $\bm g$ of proper dimension, we have
\begin{align} \label{eqn:Mpos}
\bm{g}^\top M_{\tau}(m)\bm{g} =&  \int  \bm{g}^\top \canbas{\tau}{}(\Zvec)\canbas{\tau}{\top}(\Zvec)  \bm{g} dP_\Zvec(\Zvec)  \nonumber \\
= & \int g^2(z) dP_\Zvec(\Zvec) \geq 0,
\end{align}
 where $g(z)$ is a generic polynomial in $\setpoly{\tau}{}$, whose vector of coefficients in the canonical basis $\canbas{\tau}{}(\Zvec)$ is $\bm{g}$.   Since condition \eqref{eqn:Mpos} holds for any vector $\bm{g}$, $M_{\tau}(m) \succeq 0$.

For any $j=1,\ldots,\sizevec{q}$, let us now take another real-valued vector $\bm{g}$ of proper dimension, and consider the term
\begin{align} \label{eqn:proofloc}  
\bm{g}^\top M_{\tau-\left\lceil \frac{deg(q_j)}{2}  \right\rceil}(q_j m) \bm{g}.
\end{align}
Based on the definition of the localizing matrix $\displaystyle M_{\tau-\left\lceil \frac{deg(q_j)}{2}  \right\rceil}(q_j m)$ (see eq. \eqref{eqn:locmatrix}), the term  \eqref{eqn:proofloc}  becomes:
\begin{align}
& \bm{g}^\top M_{\tau-\left\lceil \frac{deg(q_j)}{2}  \right\rceil}(q_j m) \bm{g}= \int  q_j(\Zvec)\bm{g}^\top \canbas{\tau}{}(\Zvec)\canbas{\tau}{\top}(\Zvec)  \bm{g} dP_\Zvec(\Zvec)  \nonumber \\
= & \int  q_j(\Zvec)g^2(\Zvec) dP_\Zvec(\Zvec)=\int_{\Zvecsupp}  q_j(\Zvec)g^2(\Zvec) dP_\Zvec(\Zvec) \geq 0,
\end{align}
where the above inequality holds since, by definition of the set $\Zvecsupp$ (eq. \eqref{eqn:Zdescv2}), $q_j(\Zvec) \geq 0$ for any $\Zvec \in \Zvecsupp$.   Thus, $M_{\tau-\left\lceil \frac{deg(q_j)}{2}  \right\rceil}(q_j m)  \succeq 0$.  
\end{proof}
\ \\
 
Based on Theorem \ref{Prop:Momentmatrix}, for any integer $\displaystyle \tau \geq \tilde{\tau}$, 
 instead of requiring the conditions in \eqref{eqn:probProbsingZvecmom}, one may require the weaker conditions in \eqref{eqn:Mrel}. This leads  to an upper bound $\bar{p}^{\tau}$ of $\bar{p}$, which can be computed by solving the (convex) SDP problem:
\begin{subequations}  \label{eqn:probProbsingZvecmomrelax}
\begin{align} 
\bar{p}^{\tau}=& \sup_{m=\{m_{\alpha}\}_{\alpha \in \setinteger{2 \tau}{\sizevec{z}}}}\sum_{\alpha \in \setinteger{2 \tau}{\sizevec{z}}}\bm{h}_{\alpha}m_{\alpha}\\
  \nonumber
  s.t. \\
  &  \sum_{\alpha \in \setinteger{2 \tau}{\sizevec{z}}}\bm{\tilde{f}}_{i,\alpha}m_{\alpha}=\mu_i, \ \ \ i=2,\ldots,\sizevec{f},  \\
	      &   m_{\small 0\cdots 0} =1, \ \   M_\tau(m) \succeq 0, \\
				&   M_{\tau-\left\lceil \frac{deg(q_j)}{2}  \right\rceil}(q_j m)   \succeq 0, \ \ j=1,\ldots \sizevec{q}. 
\end{align} 
\end{subequations}
{\begin{example}
In the explanatory example considered so far,
\begin{align*}
h(z)=& x_1^2+x_2^2=z_3^2+z_4^2=z^{0020}+z^{0002}, \\ 
\tilde{f}_2(z)=& \rho = z_1 = z^{1000}. 
\end{align*}
 Thus, for a relaxation order $\tau=2$, the   SDP problem \eqref{eqn:probProbsingZvecmomrelax}    is given by:
 \begin{align}  \label{prob:exSDP}
\overline{p}^{\tau}=& \sup_{m=\{m_{\alpha}\}_{\alpha \in \setinteger{2 \tau}{4}}} m_{0020}+m_{0002} \nonumber\\
  s.t. \   &  m_{0000}=1, \ \ m_{1000}=0.5 \nonumber\\
				& M_1(m) \succeq 0, \ \   M_{0}(q_j m)   \succeq 0, \ \ j=1,\ldots 7,  
\end{align} 
with 
\begin{align*}
M_1(m)=\left[
\begin{array}{ccccc}
m_{0000} & m_{1000} & m_{0100} & m_{0010} & m_{0001}\\ 
m_{1000} & m_{2000} & m_{1100} & m_{1010} & m_{1001}\\
m_{0100} & m_{1100} & m_{0200} & m_{0110} & m_{0101}\\ 
m_{0010} & m_{1010} & m_{0110} & m_{0020} & m_{0011}\\
m_{0001} & m_{1001} & m_{0101} & m_{0011} & m_{0002}\\  
\end{array}
\right],
\end{align*}
{\small \begin{align*}
& M_0(q_1m)=m_{1000}, \ \ M_0(q_2m)\!=\!1\!-\!m_{1000}, \ \ M_0(q_3m)\!=\!m_{0100}, \\
& M_0(q_4m)=m_{1010}-m_{0010}, \ \  M_0(q_5m)=-m_{1010}+m_{0010}, \\
& M_0(q_6m)=m_{0001}, \ \  M_0(q_7m)=-m_{0001}, \\
& M_0(q_8m)=1-m_{0020}-m_{0002}.
\end{align*}} \hfill $\blacksquare$
\end{example}}

By construction, the moment and the localizing matrices  are such that:
\begin{align*}
M_{\tau+1}(m) \succeq 0 & \Rightarrow M_{\tau}(m) \succeq 0, \\
 M_{\tau+1\!-\!\left\lceil \frac{deg(q_j)}{2}  \right\rceil}\! (q_j m)   \! \succeq \! 0 \! & \Rightarrow \!  M_{\tau-\!\left\lceil \frac{deg(q_j)}{2}  \right\rceil}\! (q_j m)   \! \succeq \! 0. 
\end{align*}

This implies: 
\begin{equation}  \label{eqnp4}
\bar{p}^{\tau} \geq \bar{p}^{\tau+1} \geq \bar{p},
\end{equation}
which means that, as the relaxation order $\tau$ increases, the SDP relaxation \eqref{eqn:probProbsingZvecmomrelax}   becomes tighter.    Furthermore, under mild restrictive assumptions on the description of  the set $\Zvecsupp$, the solution of the SDP relaxed problem  \eqref{eqn:probProbsingZvecmomrelax} converges to the global optimum  $\bar{p}$ of the original optimization problem \eqref{eqn:probProbsingZetavec}, i.e., 
\begin{equation} \label{eqn:con} \lim_{\tau \rightarrow \infty}\bar{p}^{\tau}=\bar{p}. \end{equation}
The proof of the converge property in \eqref{eqn:con} is reported in the appendix, along with the needed assumptions.

\begin{remark}
The number $N_{\tau}$ of the optimization variables $\displaystyle m=\{m_{\alpha}\}_{\alpha \in \setinteger{2 \tau}{\sizevec{z}}}$ of problem \eqref{eqn:probProbsingZvecmomrelax} is given by the binomial expression:
\begin{equation*}
N_{\tau}=\left(\begin{array}{c} \sizevec{z} + 2\tau \\ 2\tau \end{array}\right)=O\left( \sizevec{z}^{2\tau}\right), 
\end{equation*}
and thus, for fixed relaxation order $\tau$, $N_{\tau}$ grows polynomially with the size of the vector $z$.
\end{remark}


\begin{property} \label{remark:conscondppp}
Since the relaxed SDP problem \eqref{eqn:probProbsingZvecmomrelax} provides an upper bound of $\bar{p}$ (i.e., $\bar{p}^{\tau}   \geq \bar{p}$), sufficient conditions on the $\mathcal{D}$-stability of  $A(\unc)$ can be derived from $\bar{p}^{\tau}$. Specifically:
\begin{itemize} 
\item if the only information on the uncertain parameter vector $\unc$ is the support $\Unc$ of its probability measures (i.e., $\unc \in \Unc$), then, from Theorem \ref{theor:loctheorem} and Corollary \ref{theor:loctheorem2},  $\bar{p}$ can be either $0$ ($A(\unc)$ is robustly $\mathcal{D}$-stable) or $1$ ($A(\unc)$ is not robustly $\mathcal{D}$-stable). Thus, if $\bar{p}^{\tau}<1$, we can claim that   $\bar{p}=0$ and thus $A(\unc)$ is guaranteed to be robustly $\mathcal{D}$-stable  against the uncertainty set $\Unc$. On the other hand, if $\bar{p}^{\tau} \geq 1$, no conclusions can be drawn, in principle,  on the robust $\mathcal{D}$-stability of $A(\unc)$. 
\item if the information on the moments of $\unc$ are given, then $\bar{p}$ represents the probability of the matrix $A(\unc)$ to have at least an eigenvalue in $\mathcal{D}^c$.  Thus, since $\bar{p}^{\tau}\geq\bar{p}$, we can claim that $A(\unc)$ is not $\mathcal{D}$-stable with probability less than or equal to $\bar{p}^{\tau}$. Equivalently,  $A(\unc)$ is $\mathcal{D}$-stable with probability  at least $1-\bar{p}^{\tau}$. \hfill $\blacksquare$
\end{itemize}
\end{property}

{\begin{example}
Let us go back to the explanatory example. For a relaxation order $\tau=2$, the solution of the SDP problem \eqref{prob:exSDP} is $\overline{p}^{\tau}=0.5$. Thus,  we can claim that $A(\unc)$ is not $\mathcal{D}$-stable with probability  at most $=0.5$. Note that the obtained solution $\overline{p}^{\tau}=0.5$ is tight (i.e., $\overline{p}^{\tau}=\overline{p}$). In fact, we have already seen in Example \ref{ex:Probc2} that, for  a probability measure
${\Pr}_{\unc}=0.5\delta_{(0)}+0.5\delta_{(1)}$, the matrix $A(\unc)$ has an eigenvalue equal to $0$ with probability $0.5$.  \hfill $\blacksquare$
\end{example}}

\section{Applications and examples} \label{Sec:examples}
 

In this section, we show the application of the proposed approach \DPrev{through three numerical examples. The problem of robust Hurwitz stability analysis of uncertain matrices is addressed in the first example, and a comparison with the polynomial optimization based approaches proposed in \cite{hess2016semidefinite,henrion2012inner} is also provided.   Robust and probabilistic analysis of the properties of dynamical models with parametric uncertainty is discussed in the second and in the third  examples. Specifically, in the second example, taken from   \cite{KiBr2016}, sufficient
conditions for nonexistence of bifurcations in uncertain nonlinear
continuous-time dynamical systems are derived. Both  the deterministic  and the probabilistic scenario are considered. The other example is focused on the analysis of robust stability and   performance verification of  LTI systems with parametric uncertainty. The robust and  probabilistic  formulations are combined to verify robust stability  of the system and to compute the minimum probability to meet the   performance specifications.} 

All computations are carried out on  an i7 2.40-GHz Intel core
processor with $3$ GB of RAM running MATLAB R2014b. The YALMIP  Matlab interface \cite{lofberg2004yalmip} is used to construct the relaxed SDP 
problems \eqref{eqn:probProbsingZvecmomrelax}, which are solved  through the general purpose SDP
solver SeDuMi \cite{1999Ast}.


\DPrev{\subsection{Hurwitz stability and polynomial abscissa} \label{subsec:Henrion}
The aim of this  example is to highlight the  advantages of our approach    w.r.t. the polynomial optimization based methods presented in \cite{hess2016semidefinite,henrion2012inner}.  Since the method in \cite{hess2016semidefinite} is focused on the approximation of the abscissa of an uncertain  polynomial (i.e., maximum real part of the roots of a univariate polynomial), a robust Hurwitz stability analysis problem is discussed.
 
Let us consider the uncertain matrix 
\begin{equation} \label{eqn:Arho}
A(\rho)=\left[\begin{array}{cc}
-2.4-\rho_1^2  & 6-\rho_1^2 \\
1-2\rho_1^2  & -2.9-2\rho_1
\end{array}
\right],
\end{equation} 
with $\rho_1 \in \Delta=\left[-0.1 \ \ 3.4 \right]$,  whose characteristic polynomial is given by:
\begin{align} \label{eqn:expPOL}
P(s,\rho_1)=&s^2+(5.3+2\rho_1+\rho_1^2) s\\
           + & 0.96+4.8\rho_1+15.9\rho_1^2+2\rho_1^3-2\rho_1^4.  \nonumber
\end{align}

\noindent \underline{\emph{Polynomial abscissa approximation \cite{hess2016semidefinite}}} \\
\noindent The main idea in \cite{hess2016semidefinite} is to find a fixed-degree polynomial $\overline{P}_d(\rho_1)$ approximating, from above, the abscissa   $a(\rho_1)$  of the polynomial $P(s,\rho_1)$. Specifically, among all the polynomials $\overline{P}_d(\rho_1)$ of given degree $d$ such that
\begin{equation*}
\overline{P}_d(\rho_1) \geq a(\rho_1) \ \ \forall \rho_1 \in \Delta,
 \end{equation*}
the one minimizing the integral 
\begin{equation} \label{eqn:exint1}
\int_{\rho_1 \in \Delta} \overline{P}_d(\rho_1) d\rho_1 
 \end{equation}
is sought. SDP relaxations based on \emph{sum-of-squares} are then used to find the upper approximating polynomial $\overline{P}_d(\rho_1)$. Note that, if $  \max_{\rho_1 \in \Delta} \overline{P}_d(\rho_1)<0$, then all the roots of the characteristic polynomial $P(s,\rho_1)$ have negative real part, thus the matrix $A(\rho)$ in \eqref{eqn:Arho} is guaranteed to be robust Hurwitz stable. Fig. \ref{fig:abscissa} shows the abscissa $a(\rho_1)$ of the polynomial $P(s,\rho_1)$, along with computed upper approximating polynomial $\overline{P}_d(\rho_1)$ of degree $d=8$ in the interval $\Delta=\left[-0.1 \ \ 3.4 \right]$. 
 As $\overline{P}_d(\rho_1) \geq 0$ for some values of $\rho_1 \in \Delta$, no conclusions can be drawn from  $\overline{P}_d(\rho_1)$ on robust Hurwitz stability of the matrix $A(\rho)$. This conservativeness is due to the fact that  the computed   polynomial $\overline{P}_d(\rho_1)$ is the ``best''  (w.r.t. the integral \eqref{eqn:exint1}) upper approximation of the abscissa $a(\rho_1)$   over the whole uncertainty set $\Delta$. On the other hand, in assessing  robustly Hurwitz stability of $P(s,\rho_1)$, we are only interested in approximating   the   maximum of the abscissa over $\rho_1 \in \Delta$. The CPU time required to verify  Hurwitz stability of $A(\rho)$ is $2.5$ s. This includes the time required to compute the upper approximating polynomial $\overline{P}_d(\rho_1)$ as well as the time required to compute  its maximum over $\rho_1$  through Lasserre's relaxation. 

Note that, in the general case of multidimensional
uncertainty $\rho$, another source of conservativeness may also come from the fact that the maximum of the polynomial $\overline{P}_d(\rho)$ over $\Delta$ cannot be computed with a simple plot, but it should be computed through the Lasserre's SDP relaxation \cite{2001Ala}, which only provides an upper bound of the maximum of  $\overline{P}_d(\rho)$. Finally, in case the polynomial $\overline{P}_d(\rho)$ is of large degree (say, $d>10$), a large Lasserre's relaxation order may be needed to achieve a tight approximation of the maximum of $\overline{P}_d(\rho)$, thus leading to  Lasserre's relaxations which might be computationally intractable.\\  

\begin{figure}[htp]
\centering
\centering
 \includegraphics[ trim={0cm 0cm 9cm 25cm},clip]{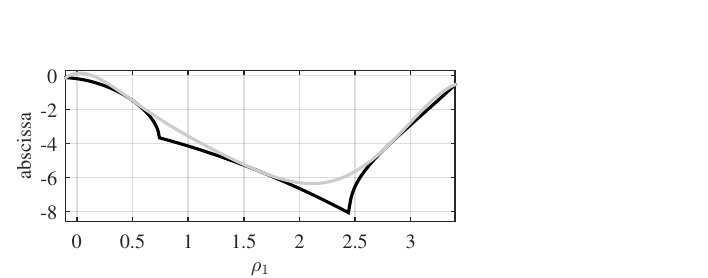}
 \caption{Abscissa of the polynomial $P(s,\rho_1)$ in \eqref{eqn:expPOL} (black line) and computed polynomial approximation (gray line).} \label{fig:abscissa}
\end{figure} 

\noindent \underline{\emph{Hermite stability criterion \cite{henrion2012inner}}} \\
\noindent The problem of robust $\mathcal{D}$-stability of a polynomial is tackled  in \cite{henrion2012inner} approximating the minimum  eigenvalue of the associated  Hermite matrix. In order to check Hurwitz stability of the 
polynomial $P(s,\rho_1)$ in \eqref{eqn:expPOL},  the associated $2\times 2$ symmetric Hermite matrix $H(\rho_1)$ is constructed. Since the coefficients of $P(s,\rho_1)$ are polynomials in $\rho_1$ of maximum degree $4$, the entries of the matrix $H(\rho_1)$ are polynomials in $\rho_1$ of maximum degree $8$. According to the Hermite stability criterion (see \cite{henrion2012inner}),  $P(s,\rho_1)$ is robustly Hurwitz stable if and only if
\begin{equation} \label{rqn:exH}
H(\rho_1) \succ 0, \ \ \forall \rho_1 \in \Delta.
\end{equation}
  The  robust minimum eigenvalue of $H(\rho_1)$ is given by
\begin{align} \label{eqn:lambdapol}
\lambda_{\mathrm{min}}=\min_{\rho \in \Delta}\min_{x \in \mathbb{R}^2: x^\top x=1} x^\top H(\rho_1) x.
\end{align}
As well known, \eqref{rqn:exH} holds, or equivalently $P(s,\rho_1)$ is robustly Hurwitz stable, if and only if  $\lambda_{\mathrm{min}}>0$. 
Then, a lower bound $\underline{\lambda}_{\mathrm{min}}$ of $\lambda_{\mathrm{min}}$ is computed solving the polynomial optimization problem \eqref{eqn:lambdapol} through the Lasserre's hierarchy, for a relaxation order $\tau=5$, which is the minimum  allowed value for $\tau$, as the objective function in \eqref{eqn:lambdapol} is a 10-degree polynomial in the augmented variable  $[x\  \rho_1]$.  We obtain a lower bound $\underline{\lambda}_{\mathrm{min}}=6.5$, in a CPU time of $2.7$ s. The obtained results allow us to claim that  $H(\rho_1)$ is robustly positive definite, thus $P(s,\rho_1)$ is robustly Hurwitz stable, and and  no conservativeness is introduced  in relaxing problem \eqref{eqn:lambdapol} through the Lasserre's hierarchy. However, the example   shows that even if the entries of the matrix $A(\rho)$  are polynomial functions of $\rho_1$ of degree at most $2$, the objective function minimized in \eqref{eqn:lambdapol} is a polynomial of degree $10$, which required to use a   Lasserre's relaxation order at least equal to $\tau=5$. As already discussed, the Lasserre's hierarchy may become  computationally intractable in the more general case of multidimensional uncertain parameter $\rho$ and large relaxation orders.\\
 

\noindent \underline{\emph{Robust $\mathcal{D}$-stability analysis}} \\
\noindent The approach proposed in this paper is now used to assess robust Hurwitz stability of the matrix $A(\rho)$.  The polynomial optimization problem \eqref{eqn1:theloc} is formulated, and solved through the Lasserre's hierarchy for a relaxation order $\tau=3$ (namely, the SDP problem \eqref{eqn:probProbsingZvecmomrelax}  is solved without using any information on the moments of $\rho_1$). The obtained solution of the SDP relaxed problem \eqref{eqn:probProbsingZvecmomrelax} is $10^{-9}$. Thus, according to Property \ref{remark:conscondppp},  $A(\rho)$ is robustly Hurwitz stable. The CPU time required to assess robust Hurwitz stability of   $A(\rho)$ is $1.5$ s. Thus, in this simple example, the proposed  approach is about $1.6$x faster than the methods   \cite{henrion2012inner} and  \cite{hess2016semidefinite}. This is due to the fact that, in the presented approach, the Lasserre's relaxation order $\tau$ can be kept ``small'', as the maximum degree of the polynomial constraints in  \eqref{eqn1:theloc} is $3$ because of the product  $A(\rho)x$.  
 }
 

\subsection{Bifurcation analysis}
The example discussed in this section has been recently studied in \cite{KiBr2016}, where the analysis of the location of the eigenvalues of an uncertain matrix is applied to derive sufficient conditions for nonexistence of bifurcations in nonlinear continuous-time dynamical systems with parametric uncertainty.

As an example, \cite{KiBr2016} considers  a  continuous-time  predator-prey model, described by the differential equations
\begin{subequations} \label{eq:exPP}
\begin{align}
\dot{r}_1=&\gamma r_1(1-r_1)-\frac{\rho_1r_1r_2}{\rho_2+r_1},\\
\dot{r}_2=&-\rho_3 r_2+\frac{\rho_1r_1r_2}{\rho_2+r_1},
\end{align}
\end{subequations}
where $r_1$ and $r_2$ are scaled population numbers, $\gamma=0.1$ is the prey growth rate,  $\rho_1$, $\rho_2$ and $\rho_3$ are real uncertain parameters.   

A non-trivial equilibrium point for the model \eqref{eq:exPP} is:
\begin{align} \label{eq:exFP}
r_{1,\textrm{eq}}=\frac{\rho_2\rho_3}{\rho_1-\rho_3}, \ \ r_{2,\textrm{eq}}=\frac{\gamma\rho_2}{\rho_1-\rho_3}\left(1-\frac{\rho_2\rho_3}{\rho_1-\rho_3}\right).
\end{align}
The Jacobian $J$ of the system  at the equilibrium point $(r_{1,\textrm{eq}},r_{2,\textrm{eq}})$ in \eqref{eq:exFP} is 
\begin{equation} \label{eqn:ex1J}
J(r_{1,\textrm{eq}},r_{2,\textrm{eq}})=
\left[\begin{array}{cc}
\gamma\frac{\rho_3}{\rho_1}\left(1-\rho_2\frac{\rho_1+\rho_3}{\rho_1-\rho_3}\right) & -\rho_3 \\
\gamma \frac{1}{\rho_1}\left(\rho_1-\rho_3-\rho_2\rho_3\right) & 0
\end{array}\right]. 
\end{equation}

Well known  results from the bifurcation theory \cite{kuznetsov2013elements} state that a sufficient condition  to guarantee the existence of no local bifurcations at the equilibrium point $(r_{1,\textrm{eq}},r_{2,\textrm{eq}})$ is that $J(r_{1,\textrm{eq}},r_{2,\textrm{eq}})$ has no eigenvalues with zero real part.

Let us consider uncertain parameters  $\rho_1$, $\rho_2$ and $\rho_3$ which take values in the intervals
\begin{equation} \label{eq:exuncint}
\rho_i \in \left[\rho^{\mathrm{o}}_i - k \Delta \rho^{\mathrm{}}_i  \ \  \rho^{\mathrm{o}}_i + k \Delta \rho^{\mathrm{}}_i  \right], \ \ i=1,2,3,
\end{equation}
where $\rho^{\mathrm{o}}_i$ denotes the nominal value of the parameter  $\rho_i$, $k \in \mathbb{R}$ is a scaling factor, and  $\Delta \rho^{\mathrm{}}_i$ characterizes the  width of the uncertainty interval where $\rho_i$ belongs to. Like in \cite{KiBr2016}, we assume that  the uncertainty intervals in \eqref{eq:exuncint} share the same width, i.e.,  $\Delta \rho^{\mathrm{}}_i=1$ for all $i=1,2,3$, and they are centered at the nominal values  $\rho^{\mathrm{o}}_1=9$,  $\rho^{\mathrm{o}}_2=2$ and $\rho^{\mathrm{o}}_3=2$. 

Note that the entries of the \DP{Jacobian} $J(r_{1,\textrm{eq}},r_{2,\textrm{eq}})$ are not polynomial functions in the uncertain parameters  $\rho_1$, $\rho_2$ and $\rho_3$. However, by introducing  the slack variables: 
\begin{align*}
t_1=\frac{\rho_3}{\rho_1},   \ \ t_2=\frac{1}{\rho_1-\rho_3},
\end{align*}
 the entries of the matrix $J(r_{1,\textrm{eq}},r_{2,\textrm{eq}})$ can be rewritten as polynomial functions in $\rho_1$, $\rho_2$,  $\rho_3$,  $t_1$, $t_2$, i.e., 
\begin{equation*}
J(r_{1,\textrm{eq}},r_{2,\textrm{eq}})=
\left[\begin{array}{cc}
\gamma t_1\left(1-\rho_2\left(\rho_1+\rho_3\right)t_2\right) & -\rho_3 \\
\gamma \left(1-t_1-\rho_2t_1\right) & 0
\end{array}\right], 
\end{equation*}
where the additional polynomial constraints:
\begin{equation} \label{eqn:newcons}
\rho_1t_1=\rho_3,  \ \ (\rho_1-\rho_3)t_2=1,
\end{equation} 
 have to be \DP{considered along with  the interval constraints \eqref{eq:exuncint} on $\rho_i$  to maintain the relationship among the entries of  the matrix $J(r_{1,\textrm{eq}},r_{2,\textrm{eq}})$. This leads to an augmented set of uncertain variables (namely, $\rho_1,\rho_2,\rho_3,t_1,t_2$), which are constrained  to belong to the nonconvex uncertainty set described by the constraints \eqref{eq:exuncint} and \eqref{eqn:newcons}.
} \\

\noindent \underline{\emph{Deterministic bifurcation analysis}} \\
\noindent Let   $\mathcal{D}^c$ be the imaginary axis of the complex place. i.e.,
\begin{align*}  
\mathcal{D}^c=&\left\{\!\lambda \in \mathbb{C}: \! \lambda=\lambda_{\mathrm{re}}+j\lambda_{\mathrm{im}}, \ \ \lambda_{\mathrm{re}}, \lambda_{\mathrm{im}} \! \in \! \mathbb{R}, \ \lambda_{\mathrm{re}}=0\right\},  
\end{align*} 
For  fixed width $k$ of the uncertainty intervals $\displaystyle \left[\rho^{\mathrm{o}}_i - k \Delta \rho^{\mathrm{}}_i  \ \  \rho^{\mathrm{o}}_i + k \Delta \rho^{\mathrm{}}_i  \right]$, the  deterministic bifurcation analysis problem can be formulated as a $\mathcal{D}$-stability analysis problem, or equivalently, in terms of  problem \eqref{eqn:probProbloc}, by  assuming to know only the support of the uncertain parameters $\rho_1$, $\rho_2$,  $\rho_3$. An upper bound $\overline{p}^{\tau}$ of $\overline{p}$ (i.e., solution  of \eqref{eqn:probProbloc}) is computed by solving  the relaxed SDP problem \eqref{eqn:probProbsingZvecmomrelax} for a relaxation order $\tau=3$.

 Based on considerations given in Property \ref{remark:conscondppp}, if $\overline{p}^{\tau} < 1$, then $J(r_{1,\textrm{eq}},r_{2,\textrm{eq}})$ is guaranteed   to have no eigenvalues  on the imaginary axis for any  $\displaystyle \rho_i \in \left[\rho^{\mathrm{o}}_i - k \Delta \rho^{\mathrm{}}_i  \ \  \rho^{\mathrm{o}}_i + k \Delta \rho^{\mathrm{}}_i  \right]$. A bisection on the width $k$ of the uncertainty intervals $\displaystyle \left[\rho^{\mathrm{o}}_i - k \Delta \rho^{\mathrm{}}_i  \ \  \rho^{\mathrm{o}}_i + k \Delta \rho^{\mathrm{}}_i  \right]$ is then carried out to compute (a lower bound of) the maximum value of $k$ such that $J(r_{1,\textrm{eq}},r_{2,\textrm{eq}})$ is guaranteed not to have any eigenvalues  on the imaginary axis for any $\displaystyle \rho_i \in \left[\rho^{\mathrm{o}}_i - k \Delta \rho^{\mathrm{}}_i  \ \  \rho^{\mathrm{o}}_i + k \Delta \rho^{\mathrm{}}_i  \right]$.  The obtained   value of $k$ is $k=0.4620$ (similar to the result obtained in \cite{KiBr2016}) and  the CPU time required to solve problem  \eqref{eqn:probProbsingZvecmomrelax} for fixed $k$ is, in average, $536$ seconds. 
Since  sufficient conditions on robust $\mathcal{D}$-stability are derived from $\overline{p}_{}^{\tau}$, we can claim that the system is guaranteed to have no local bifurcation at the   equilibrium point  $(r_{1,\textrm{eq}},r_{2,\textrm{eq}})$ for any $\displaystyle \rho^{\mathrm{}}_i$ in the interval $\displaystyle \left[\rho^{\mathrm{o}}_i - k \Delta \rho^{\mathrm{}}_i  \ \  \rho^{\mathrm{o}}_i + k \Delta \rho^{\mathrm{}}_i  \right]$, with $i=1,2,3$ and $k=0.4620$.

In this example,   tightness of the computed solution  can be verified analytically. In fact, the determinant of $J(r_{1,\textrm{eq}},r_{2,\textrm{eq}})$ is:
\begin{equation} \label{eq:detJ}
det(J(r_{1,\textrm{eq}},r_{2,\textrm{eq}}))=\alpha\frac{\rho_3}{\rho_1}\left(\rho_1-\rho_3-\rho_2\rho_3\right),
\end{equation}
which is equal to zero for $\rho_1=8.5412$, $\rho_2=\rho_3=2.4650$. This values of $\unc_1,\unc_2$ and $\unc_3$ lie in the intervals $\displaystyle \left[\rho^{\mathrm{o}}_i - k \Delta \rho^{\mathrm{}}_i  \ \  \rho^{\mathrm{o}}_i + k \Delta \rho^{\mathrm{}}_i  \right]$ for $k=0.4650$.\\

\noindent \underline{\emph{Probabilistic bifurcation analysis}} \\
 Let us now consider the case where the uncertain parameters $\unc_i$ belong to the intervals
 \begin{equation*} 
\rho_i \in \left[\rho^{\mathrm{o}}_i - k \Delta \rho^{\mathrm{}}_i  \ \  \rho^{\mathrm{o}}_i + k \Delta \rho^{\mathrm{}}_i  \right], \ \ i=1,2,3,
\end{equation*}
with $k=1$. The expected values of all the three uncertain parameters are known and equal to their nominal values, i.e.,
\begin{equation*}
\mathbb{E}\left[\rho_i\right] =\rho^{\mathrm{o}}_i,  \ \ i=1,2,3.
\end{equation*} 
 Furthermore, we  assume that an upper bound $\overline{\sigma}^2$ on the variance of  the probability measure ${\Pr}_{\unc}$ describing the  parameters $\unc_i$ is known, i.e.,
 \begin{equation*}
\int_{\Unc}\left(\rho_i-\rho^{\mathrm{o}}_i\right)^2dP_\unc(\unc) \leq \overline{\sigma}^2, \ \ i=1,2,3.
\end{equation*}

The solution $\overline{p}_{}^{\tau}$ of the corresponding SDP problem \eqref{eqn:probProbsingZvecmomrelax} is computed for a relaxation order $\tau=3$ and for different values of the (upper bound on the) variance   $\overline{\sigma}^2$. Fig. \ref{fig:Bif} shows the computed upper probability $\overline{p}_{}^{\tau}$ of the system to have a local bifurcation at the equilibrium point $(r_{1,\textrm{eq}},r_{2,\textrm{eq}})$ for different values of the variance $\overline{\sigma}^2$. It can be observed that, although for a width $k=1$ of the uncertainty intervals the system is not guaranteed to have no local bifurcation, under the considered  assumptions on the mean and the maximum variance,  the probability that the system has a local bifurcation at the equilibrium point $(r_{1,\textrm{eq}},r_{2,\textrm{eq}})$ is, in the worst-case scenario, smaller than $0.1$  for $\overline{\sigma}^2$ smaller than $0.15^2$.
\textit{In other words the system has not local bifurcation with probability at least $0.9$}.  Therefore with such an information on the moments we can guarantee
that the system has not local bifurcations with probability at least $0.9$, considering an interval width $k=1$ (that is more than two times the one considered
in the deterministic case ($k=0.462$)). \textit{We can thus be much less conservative and  at the same guaranteeing no bifurcation with ``high probability''.}
Note also that, for values of  $\overline{\sigma}^2$ larger than $0.67^2$, the (upper)   probability $\overline{p}_{}^{\tau}$ of having a local bifurcation saturates to  $0.68$. This seems to  indicate that, above a threshold $\overline{\sigma}^2=0.67^2$, the probability of having a local bifurcation does not increase as the set of feasible probability measures ${\Pr}_\unc$ enlarges. 

\begin{figure}
\centering
\includegraphics{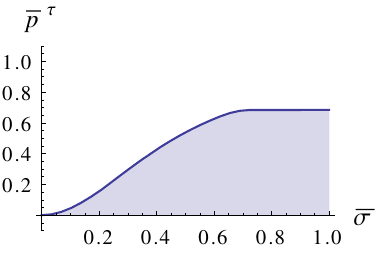}
 \caption{Bifurcation analysis: maximum standard deviation $\overline{\sigma}$ of the probability measures ${\Pr}_\unc$ vs.\ upper probability $\overline{p}_{}^{\tau}$ of having a local bifurcation.} \label{fig:Bif}
\end{figure}


\subsection{Robust stability and performance analysis of LTI systems}

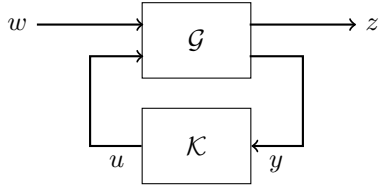
\begin{figure} 
\centering
\begin{tikzpicture}[scale=0.7]
\node[] (G) at  (6,4) {};
\node[] (K) at  (6,2) {};
\node [draw, rectangle,minimum width=1.43cm,minimum height=1cm] (Gp)  at (G) {$\mathcal{G}$};
\node [draw, rectangle,minimum width=1.43cm,minimum height=1cm] (Kp)  at (K) {$\mathcal{K}$};
\draw[->,thick] (Kp)-- (4,2)--(4,3.7)--(5,3.7);
\draw[->,thick] (7,3.7)--(8,3.7)--(8,2)--(Kp);
\draw[->,thick] (3,4.3)--(5,4.3);
\draw[->,thick] (7,4.3)--(9,4.3);
\node[left] at (3,4.3) {$\hspace{2cm} w$};
\node[right] at (9,4.3) {$z$};
\node[below] at (4.5,2) {$u$};
\node[below] at (7.5,2) {$y$};
\end{tikzpicture}
\caption{Feedback control system. $\mathcal{G}$: plant; $\mathcal{K}$: controller; $w$: generalized disturbance; $u$: control input; $z$: controlled output; $y$: measured output.} \label{fig:clsystem}
\end{figure}
 
In this example, we show how the proposed approach can be used to check robust stability and (probabilistic) satisfaction of performance requirements in  uncertain LTI systems.

Consider the closed-loop system depicted in Fig. \ref{fig:clsystem}. The state-space representation of the plant $\mathcal{G}$ is given by:
\begin{subequations} \label{ex2:matrix}
\begin{align}
\dot{x}=& Ax+B_uu+B_ww,\\
\left[\begin{array}{c}
z \\
y
\end{array}\right]=& \left[\begin{array}{c}
C_z \\
C_y
\end{array}\right]x,
\end{align}
\end{subequations}
  \DPrev{where $x=\left[x_1 \ x_2 \ x_3 \ x_4 \right]^\top$ denotes the state of the system, $y=\left[y_1 \ y_2 \ y_3 \ y_4 \right]^\top$ is the measured output that enters the controller $\mathcal{K}$, $u$, $w$ and $z$ are the control input, generalized disturbance and the controlled output, respectively.  
The values of the matrices in \eqref{ex2:matrix} are:
\begin{align*}
A\! \! = \! \! & \left[\!\!\! \begin{array}{cccc}
0 &  1\!+\!0.2\unc_1\!-\!0.1\unc_2  & -0.5 & 3\unc_3\\
\unc_2 & -\!0.2\!+\!0.1\unc_3\!-\!0.3\unc_1 & -0.4 & -\!10 \\
-4  & -\!0.1\!+\!\unc_4\!-\!0.5\unc_2 & -0.5 & 1.5 \\
0.4\!+\!0.2\unc_2\unc_3 & 3 & 4\!+\!0.5\unc_1 & 1\!+\!\unc_4^2
\end{array}\!\!\! \right]  \\
  B_u=&  \left[\!\! \begin{array}{cccc}
1 \;
1 \;
0 \;
1
\end{array}\!\!\right]^T\!,  \  B_w= \left[\! \!\begin{array}{cccc}
1.25 \;
1.25 \;
1.25  \;
1.25 
\end{array}\!\!\right]^T\!, \\
C_z=& \left[ \begin{array}{cccc}
1.25 & 0 & 0 & 0
\end{array}\right], \ \ C_y=\text{diag}(\left[ \begin{array}{cccc}
1 & 1 & 1 & 1
\end{array}\right]). 
\end{align*}
The  parameters $\unc_1$, $\unc_2$, $\unc_3$ and $\unc_4$ defining the dynamic  matrix $A$ are not known exactly 
and they   belong to the uncertainty
intervals 
\begin{subequations} \label{ex2:uncpar}
\begin{align} 
\unc_1 \in \left[\unc_1^{\mathrm{o}}-0.15 \ \ \unc_1^{\mathrm{o}}+0.15 \right],  \ \ & \unc_2 \in \left[\unc_2^{\mathrm{o}}-0.05 \ \ \unc_2^{\mathrm{o}}+0.05 \right],\\
\unc_3 \in \left[\unc_3^{\mathrm{o}}-0.25 \ \ \unc_3^{\mathrm{o}}+0.25 \right],  \ \ & \unc_4 \in \left[\unc_4^{\mathrm{o}}-0.05 \ \ \unc_4^{\mathrm{o}}+0.05 \right],
\end{align} 
\end{subequations}
where $\unc_1^{\mathrm{o}}=1$, $\unc_2^{\mathrm{o}}=0$, $\unc_3^{\mathrm{o}}=0$ and $\unc_4^{\mathrm{o}}=0$  are the nominal values of the parameters. 

The controller $\mathcal{K}$ is  a static output-feedback controller (i.e., $u=-Ky=-Kx$) designed to place the poles of the nominal closed-loop system at $-0.5 \pm j$, $-5$ and $-5$. This is achieved for a matrix gain $K=\left[36.45 \ \   -5.33  \ \ -30.67 \ \  -11.12 \right]$. 

In order to verify the robust stability of the closed-loop system, we check if the  (uncertain) closed-loop dynamic matrix
\begin{equation*}
A_{\textrm{cl}}=A-B_uK
\end{equation*}
has no eigenvalues with positive or null  real part.  This equivalent to verify that the solution $\bar{p}_{}$ of the optimization problem \eqref{eqn:probProbloc} is $0$, where the only information used in \eqref{eqn:probProbloc} is  the uncertainty intervals where the parameters $\unc_1$, $\unc_2$, $\unc_3$ and $\unc_4$ are supposed to belong to, and $\mathcal{D}^c$ is the closed right-half plane of the complex plane. Thus, based on the considerations in Property \ref{remark:conscondppp}, a sufficient condition  to guarantee that $\overline{p}_{}=0$ (or equivalently, the system is robustly stable) is    $\overline{p}_{}^\tau < 1$.  By solving 
problem  \eqref{eqn:probProbsingZvecmomrelax} for a relaxation order $\tau=2$, we obtain $\overline{p}_{}^\tau=0.05$ (CPU time=$44.98$ seconds), thus proving robust stability of the closed-loop system.\\

\noindent \underline{\emph{Robust and probabilistic performance analysis}} \\
Like in  H$_{\infty}$-control design,  the performance of the closed-loop system are specified in terms of the H$_{\infty}$-norm of the closed-loop system $\mathcal{G}_{\textrm{cl}}$ relating the generalized disturbance $w$ and the controlled output $z$, whose state-space representation is given by: 
\begin{align*}
\dot{x}=& (\underset{A_{\textrm{cl}}}{\underbrace{A-B_uK}})x+B_ww,\\
 \begin{array}{c}
z  
\end{array} =&  \begin{array}{c}
C_z  
\end{array} x.
\end{align*} 
For a given $\eta > 1$, we claim that robust performance is  achieved if  
\begin{equation*}
\left\|\mathcal{G}_{\textrm{cl}}\right\|_{\infty} < \eta,   
\end{equation*}
 for all values taken by the   parameters $\unc_1,\unc_2,\unc_3,\unc_4$  in the uncertainty intervals in \eqref{ex2:uncpar}.

As well known in  the H$_{\infty}$-control theory, the condition $\left\|\mathcal{G}_{\textrm{cl}}\right\|_{\infty} < \eta$ holds if and only if  the \emph{Hamiltonian} matrix 
\begin{equation*}
H=\left[\begin{array}{cc}
A_{\textrm{cl}} & \frac{1}{\eta^2}B_wB_w^\top \\
-C_zC_z^\top & -A_{\textrm{cl}}^\top
\end{array}\right]
\end{equation*}
has no eigenvalues on the imaginary axis. Let us set $\eta=1$. By solving the corresponding SDP relaxed problem
  \eqref{eqn:probProbsingZvecmomrelax} for a relaxation order $\tau=2$, we obtain $\bar{p}^\tau=1$. Thus, in principle,  we cannot draw any conclusions on the robust performance of the system. 
	
	 Nevertheless, some heuristics  can be used to verify, from the solution of problem \eqref{eqn:probProbsingZvecmomrelax}, if the Hamiltonian $H$ has some eigenvalues on the imaginary axis. In fact, when Lasserre's hierarchy is used to relax (deterministic) polynomial optimization problems (like \eqref{eqn1:theloc}),   the first order moments $m_{\alpha}$ of the SDP relaxed problem \eqref{eqn:probProbsingZvecmomrelax}  provides, in practice, a good approximation $(\hat{\unc},\hat{x},\hat{\lambda})$ of the global minimizer  $(\unc^*,x^*,\lambda^*)$ of the  original optimization problem \eqref{eqn1:theloc}. 
	By looking at the first order moments  $m_{\alpha}$ associated to the uncertain parameters $\unc$, we obtain   
	\begin{align*}
	\hat{\unc}=\left[  
	  \hat{\unc}_1\;\;
    \hat{\unc}_2 \;\;
   \hat{\unc}_3\;\;
   \hat{\unc}_4
	\right]^T=\left[   
	  1.101\;\;
    0.047 \;\;
   -0.222\;\;
   -0.005
\right]^T.
	\end{align*}   For this values of the uncertainty $\unc$, we obtain $\left\|\mathcal{G}_{\textrm{cl}}\right\|_{\infty}=1.013$. Thus, we can claim that   robust performance requirements are not achieved.

	Finally, the probabilistic framework is considered. Probabilistic conditions on the  performance of the system are derived under the assumption that the expected value of the uncertain parameters is given by their nominal parameters $\unc_1^{\mathrm{o}}=1$, $\unc_2^{\mathrm{o}}=\unc_3^{\mathrm{o}}=\unc_4^{\mathrm{o}}=0$, and the maximum  variance $\overline{\sigma}_i^2$  of  the probability measures ${\Pr}_{\unc_i}$ describing the uncertain  parameters $\unc_1$, $\unc_2$, $\unc_3$ and $\unc_4$  is available. Specifically, 
 \begin{align}
\int_{\Unc}\left(\rho_i-\rho^{\mathrm{o}}_i\right)^2dP_{\unc_i}(\unc_i) \,\leq\, & \overline{\sigma}_i^2, 
\end{align} 
with $\overline{\sigma}_1=0.024,\overline{\sigma}_2=,0.008,\overline{\sigma}_3=0.040,\overline{\sigma}_4=0.008$.

Solving the corresponding SDP  problem \eqref{eqn:probProbsingZvecmomrelax} for a relaxation order $\tau=2$, we obtain  $\overline{p}^\tau=0.082$ (CPU time=4916 seconds).  Based on the obtained results,\textit{ we can claim that the closed-loop system is guaranteed to be robustly stable and the performance requirements are fulfilled with probability at least $0.918$}.  

For a more exhaustive analysis on the performance of the system, we also compute the (minimum) probability $\underline{p}^\tau=1-\overline{p}^\tau$ to satisfy the condition $\left\|\mathcal{G}_{\textrm{cl}}\right\|_{\infty} < \eta$ for different values of $\eta$. The obtained results are reported in Fig. \ref{fig:Control}, which shows the computed $\underline{p}^\tau$ (representing a lower bound on the probability $\textrm{Pr}_{\rho}(\|G_\mathrm{cl}\|_{\infty}< \eta)$) w.r.t. different values  
of the norm bound $\eta$. Note that, for $\eta\geq 1.4$, the constraint $\left\|\mathcal{G}_{\textrm{cl}}\right\|_{\infty} < \eta$ is guaranteed to be satisfied with probability $1$, which also means (based on Theorem \ref{th:DEV}) that $\left\|\mathcal{G}_{\textrm{cl}}\right\|_{\infty} < \eta$ for all uncertain parameters $\rho_1,\rho_2,\rho_3,\rho_4$  in the considered uncertainty intervals. }

\begin{figure}
\centering
\includegraphics[]{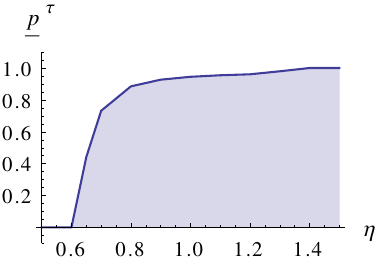}
 \caption{Probabilistic performance analysis: norm bound $\eta$  vs   lower bound $\underline{p}^\tau$ on  $\textrm{Pr}_{\rho}(\|G_\mathrm{cl}\|_{\infty}< \eta)$.} \label{fig:Control}
\end{figure}

\section{Conclusions}
In this paper, we have presented a unified framework for deterministic and probabilistic  analysis of $\mathcal{D}$-stability of uncertain matrices. A family of matrices whose  members have entries
 which vary in an uncertainty   set described by polynomial constraints is considered, and stability regions $\mathcal{D}$ whose complement is described by polynomial constraints  can be handled.  This class of stability sets is quite vast and includes, among others: 
\begin{itemize}
\item the open left half plane and the unit circle of the complex plane, which  allows us to verify stability of continuous- and discrete-time  LTI systems with parametric uncertainty;
\item the imaginary axis, which allows us to compute an upper bound on the H$_{\infty}$-norm of uncertain LTI systems; 
\item the semi-axis of positive real numbers, which allows us to verify robust and probabilistic positive definiteness of a family of real symmetric matrices;
\item the origin of the complex plane,  which allows us to verify robust and probabilistic nonsingularity of uncertain matrices.
\end{itemize} 
 Actually, the approach described in the paper  is widely applicable and it  can undoubtedly be used to  tackle many problems in systems and control theory.

The rationale behind  the method is to formulate a generalized moment optimization problem    which is relaxed through the Lasserre's hierarchy into a sequence of \emph{semidefinite programming} (SDP) problems of finite size. The relaxed problems provide  lower bounds on the minimum probability of a family of matrices to be  $\mathcal{D}$-stable. This is equivalent, in the deterministic realm, to derive sufficient conditions for robust $\mathcal{D}$-stability. It has been observed that, in practice, the level of conservativeness due to the Lasserre's relaxation  is relatively ``small'', and tight solutions are obtained in many cases.

The deterministic and the probabilistic analysis can also be easily combined to handle scenarios where some   parameters are only known to vary within given uncertainty regions, and other parameters are also characterized by probabilistic information (like mean or variance). 
 
Future activities will be  devoted
to extending the ideas underlying the developed method to both   robust and  probabilistic
 control synthesis problems. Furthermore, in order to reduced the computational time required in solving the relaxed SDP problems,  dedicated numerical algorithms   will be developed, thus avoiding the use of general purpose SDP solvers.

%

{
\appendix
\section*{Convergence of the Lasserre's hierarchy}
In this section, we discuss convergence of  the solution $\overline{p}^\tau$ of the SDP relaxed problem \eqref{eqn:probProbsingZvecmomrelax}  to the global optimum $\overline{p}$ of problem  \eqref{eqn:probProbsingZetavec}.
 First, some useful lemmas and  results are given.

\begin{lemma} {\normalfont [\textbf{Putinar's representation of positive polynomials over semialgebraic sets \cite{Putinar93}}]} \label{lemma}

Suppose that the set $\Zvecsupp$ in \eqref{eqn:Zdescv2} is compact and there exists a real-value polynomial $u(z)$ such that $\{z: u(z) \geq 0\}$ is compact and:
\begin{equation}
u(z)=u_0(z)+\sum_{i=1}^{\sizevec{q}}q_i(z)u_i(z), \label{eqn:u}
\end{equation}
where $u_i(z)$ (with $i=0,\ldots,\sizevec{q}$) are all sum-of-squares polynomials. Then, any polynomial $t(z)$ strictly positive on $\Zvecsupp$ can be written as:
\begin{equation*}
t(z)=\sigma_0(z)+\sum_{i=1}^{\sizevec{q}}q_i(z)\sigma_i(z),
\end{equation*}
where $\sigma_i(z)$ (with $i=0,\ldots,\sizevec{q}$) are all  sum-of-squares polynomials (whose degree is not known in advance). 
\end{lemma}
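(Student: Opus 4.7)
The plan is to prove this by contradiction, combining Hahn--Banach separation with a moment-problem representation, which is the classical route to Putinar's Positivstellensatz. Let me denote by $M = M(q_1,\ldots,q_{\sizevec{q}})$ the quadratic module generated by the $q_i$, namely the set of polynomials of the form $\sigma_0(z) + \sum_i q_i(z)\sigma_i(z)$ with all $\sigma_i$ sum-of-squares. The hypothesis (\ref{eqn:u}) together with compactness of $\{u\geq 0\}$ is the Archimedean condition on $M$: the first step would be to show that this condition is equivalent to the statement that for every coordinate $z_j$ there exists a constant $N_j\in\mathbb{N}$ with $N_j - z_j^2 \in M$ (and, more generally, $N - p \in M$ for every polynomial $p$ and some $N$). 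The argument is standard: since $\{u\geq 0\}\supseteq \Zvecsupp$ is compact, some large ball $\{N - \|z\|^2 \geq 0\}$ contains it, and one leverages identities like $(a+b)\sigma = \tfrac{1}{2}((a+b+\sigma)^2 - (a+b)^2 - \sigma^2) + \ldots$ to rewrite $N-z_j^2$ in the module.

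Next, suppose toward contradiction that $t$ is strictly positive on $\Zvecsupp$ but $t \notin M$. The module $M$ is a convex cone in $\mathbb{R}[z]$; the Archimedean property is precisely what makes $M$ closed in the finest locally convex topology (equivalently, in the sup-norm topology when restricted to polynomials of bounded degree on a fixed compact). Then Hahn--Banach separation produces a linear functional $L:\mathbb{R}[z]\to\mathbb{R}$ with $L(p)\geq 0$ for all $p\in M$ and $L(t)\leq 0$; after normalization we may assume $L(1)=1$. Since all sum-of-squares polynomials lie in $M$, we have $L(p^2)\geq 0$ for every $p$, so the moment matrix associated to the sequence $m_\alpha = L(z^\alpha)$ is positive semidefinite; analogously, $L(q_i p^2)\geq 0$ shows all localizing matrices are positive semidefinite.

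The heart of the proof is to realize $L$ as integration against a Borel probability measure $\mu$ supported on $\Zvecsupp$. The Archimedean condition gives, via $L(N_j - z_j^2) \geq 0$ iterated, a uniform bound on the moments $|m_\alpha|$ which places us in the determinate case of the multivariate moment problem. Applying Haviland's theorem (or directly Berg--Christensen--Jensen) yields a representing Borel measure $\mu$ on $\mathbb{R}^{\sizevec{z}}$. Its support lies in $\Zvecsupp$: indeed $\int q_i\, p^2\, d\mu = L(q_i p^2)\geq 0$ for all polynomials $p$; approximating the indicator of $\{q_i < -\varepsilon\}$ by polynomials on the compact set containing $\mathrm{supp}(\mu)$ (polynomials are dense in $C(K)$ on compact $K\subset\mathbb{R}^{\sizevec{z}}$ by Stone--Weierstrass) forces $\mu(\{q_i<0\})=0$ for every $i$, so $\mathrm{supp}(\mu)\subseteq\Zvecsupp$. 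Since $t>0$ on $\Zvecsupp$ and $\mu$ is a nonzero probability measure, $L(t)=\int t\, d\mu > 0$, contradicting $L(t)\leq 0$. Hence $t\in M$, which is exactly the desired representation.

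The main obstacle I expect is the moment-problem step, i.e.\ converting the positivity of $L$ on SOS and on the localizing forms $q_i\cdot\mathrm{SOS}$ into an honest measure supported on $\Zvecsupp$. Without the Archimedean condition, the multivariate moment problem is generally indeterminate, so the crux is to exploit $N - z_j^2 \in M$ to control the growth of the moments and pin the measure down. This step is precisely where the hypothesis on the existence of $u$ in (\ref{eqn:u}) does all the work; the separation argument and the support-localization via Stone--Weierstrass are comparatively routine.
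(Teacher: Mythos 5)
The paper itself does not prove this lemma: it is quoted directly from \cite{Putinar93} (Putinar's Positivstellensatz), so your attempt can only be compared with the classical proof in the literature. Your outline follows the standard functional-analytic route: show the quadratic module $M$ generated by $q_1,\dots,q_{\sizevec{q}}$ is Archimedean, separate a supposed $t\notin M$ from $M$ by a linear functional $L$ with $L\ge 0$ on $M$ and $L(t)\le 0$, realize $L$ as integration against a measure supported on $\Zvecsupp$, and conclude from $0\ge L(t)=\int t\,d\mu>0$. This is essentially Putinar's own strategy, and the overall structure is sound.

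Two of your justifications, as written, would not survive scrutiny. First, the separation step: it is neither true in general nor needed that the Archimedean property makes $M$ closed in the finest locally convex topology; quadratic modules are typically not closed, and no closedness is available here. What the Archimedean condition actually supplies is that $1$ is an order unit, i.e.\ an algebraic interior (core) point of $M$: for every polynomial $p$ there is $N$ with $N\pm p\in M$. That is what licenses the Eidelheit/Hahn--Banach separation of the point $t$ from the convex cone $M$ (a cone with nonempty core in the finest locally convex topology), and it also guarantees $L(1)>0$ — if $L(1)=0$ then $-NL(1)\le L(p)\le NL(1)$ forces $L\equiv 0$ — so the normalization $L(1)=1$ is legitimate. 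Second, Haviland's theorem cannot be applied directly: it requires $L\ge 0$ on all polynomials nonnegative on a closed set, whereas here positivity of $L$ is known only on $M$. The correct finish is the alternative you mention in passing: the Archimedean bounds give $L(z_j^{2k})\le N^k$, so the moment sequence is positive semidefinite and exponentially bounded, and the Berg--Christensen--Ressel theorem on bounded moment sequences (or, as in Putinar's original paper, the GNS construction together with the spectral theorem for the resulting bounded commuting multiplication operators) produces a representing measure on a compact box; determinacy is irrelevant, only existence is needed. With the separation step justified via the order-unit property and Haviland replaced by one of these tools, your support-localization argument via Stone--Weierstrass and the final contradiction are correct.
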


Note that  if the set  $\Zvecsupp$ is included  in the ball $\{z: \|z\|^2 \leq a^2\}$, for $a$ sufficiently large, one way to ensure that the assumptions  in Lemma \ref{lemma} are satisfied is to add in the definition of $\Zvecsupp$ the constraint $q_{\sizevec{q}+1}(z)=a^2- \|z\|^2 \geq 0$ and chose in \eqref{eqn:u} $u_i=0$ ($i=0,\ldots,\sizevec{q}$) and $u_{\sizevec{q}+1}=1$.

\begin{proposition}   \label{prop:dual}
 The dual  of the semi-infinite optimization problem \eqref{eqn:probProbsingZetavec} is:
\begin{subequations} \label{eqn:pdual}
\begin{align} 
t^{*}=\inf_{\nu} & \ \ \nu_1 + \sum_{i=2}^{\sizevec{f}}\nu_i \mu_i \\
s.t. & \ \ \nu_1 + \sum_{i=2}^{\sizevec{f}}\nu_i \tilde{f}_i(z) -h(z) \geq 0 \ \ \forall z \in \Zvecsupp.
\end{align}
\end{subequations} 
From well known results on dual optimization \cite{bertsimas2005optimal}, if  $\mu$ belongs to the interior of the moment space generated by $P_z \in \mathcal{P}_z(\mu)$, then there is no duality gap between problem \eqref{eqn:probProbsingZetavec} and problem \eqref{eqn:pdual}, i.e., 
\begin{equation} \label{eqnp3}
t^{*}=\overline{p}.
\end{equation}
\end{proposition}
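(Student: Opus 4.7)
The plan is to derive \eqref{eqn:pdual} by standard Lagrangian duality for the generalized moment problem \eqref{eqn:probProbsingZetavec}, and then invoke the classical no-gap result from \cite{bertsimas2005optimal}. First I would view the primal as a linear program over the cone of non-negative Borel measures $P_z$ supported on $\Zvecsupp$, observing that (i) the normalization $\int dP_z = 1$ and $\int_{\Zvecsupp} dP_z = 1$ can be combined into a single moment constraint by setting $\tilde{f}_1 \equiv 1$, $\mu_1 = 1$, and restricting $P_z$ to measures with $\mathrm{supp}(P_z) \subseteq \Zvecsupp$; (ii) both the objective $\int h\,dP_z$ and the constraints $\int \tilde{f}_i\,dP_z = \mu_i$ are linear functionals of $P_z$.

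Second, attaching a real multiplier $\nu_i$ to the $i$-th moment constraint ($i=1,\ldots,\sizevec{f}$) I would form the Lagrangian
\begin{align*}
L(P_z,\nu) = \sum_{i=1}^{\sizevec{f}} \nu_i \mu_i + \int_{\Zvecsupp}\!\Bigl(h(z) - \sum_{i=1}^{\sizevec{f}} \nu_i \tilde{f}_i(z)\Bigr)\,dP_z(z),
\end{align*}
and compute $\sup_{P_z \geq 0,\, \mathrm{supp}\subseteq \Zvecsupp} L(P_z,\nu)$. This supremum equals $+\infty$ as soon as the integrand is positive at some $z \in \Zvecsupp$ (one can rescale a Dirac mass at that point) and equals $\sum_i \nu_i \mu_i$ otherwise. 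Imposing finiteness therefore imposes the pointwise polynomial inequality $\nu_1 + \sum_{i=2}^{\sizevec{f}} \nu_i \tilde{f}_i(z) \geq h(z)$ for all $z \in \Zvecsupp$, which is exactly the dual feasible set in \eqref{eqn:pdual}. Minimizing over $\nu$ then produces the stated dual, and weak duality $t^* \geq \overline{p}$ follows immediately from this construction.

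Third, to obtain equality $t^* = \overline{p}$ I would appeal to the standard strong-duality theorem for generalized moment problems (Isii's theorem, as recast in \cite{bertsimas2005optimal}): if the moment vector $\mu = (\mu_1,\ldots,\mu_{\sizevec{f}})$ lies in the \emph{interior} of the moment cone $\mathcal{M} = \{(\int \tilde{f}_1\,dP,\ldots,\int \tilde{f}_{\sizevec{f}}\,dP) : P \geq 0,\ \mathrm{supp}(P) \subseteq \Zvecsupp\}$, then the primal is feasible, bounded above, and a Slater-type qualification holds for the dual, ruling out any duality gap. Compactness of $\Zvecsupp$ (which follows from compactness of $\Unc$, the bound $\|x\|^2 \leq 1$, and the fact that the eigenvalue constraint together with boundedness of $A(\unc)$ keeps $\lambda$ in a compact set of $\mathcal{D}^c$) ensures that $\mathcal{M}$ is a well-defined closed convex cone, so the interior condition is meaningful.

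The main obstacle is not the algebraic manipulation, which is mechanical, but rather the verification that the hypotheses for applying the no-gap theorem are genuinely satisfied: one must check that $\Zvecsupp$ is compact (so that the primal supremum is attained or at least the dual is attained with matching value) and that the problem admits a strictly interior feasible moment sequence. In our setting compactness comes for free from the construction of $\Zvecsupp$; the interior condition is the substantive assumption that is simply postulated in the statement of the proposition, so the proof reduces to citing \cite{bertsimas2005optimal} once the dual has been identified.
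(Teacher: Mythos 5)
Your core argument — casting \eqref{eqn:probProbsingZetavec} as a linear program over nonnegative Borel measures supported on $\Zvecsupp$, forming the Lagrangian, observing that finiteness of the dual function forces the pointwise constraint $\nu_1+\sum_{i=2}^{\sizevec{f}}\nu_i\tilde{f}_i(z)\geq h(z)$ on $\Zvecsupp$, deducing weak duality, and then invoking the interior-moment no-gap theorem of \cite{bertsimas2005optimal} — is exactly the route the paper takes; indeed the paper states the proposition with no further proof beyond that citation, so your write-up just makes the mechanical steps explicit, and they are correct.

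One side claim is wrong, though it does not sink the proof: compactness of $\Zvecsupp$ does \emph{not} come for free from its construction. The eigenvector constraints in \eqref{eqn:Zdesc} are satisfied by $x_{\textrm{re}}=x_{\textrm{im}}=0$ for \emph{any} $\lambda\in\mathcal{D}^c$, so the constraint $(A(\unc)-\lambda I)x=0$ places no bound on $\lambda$; whenever $\mathcal{D}^c$ is unbounded (the closed right half-plane of the running example, or the imaginary axis) the set $\Zvecsupp$ is unbounded. This is precisely why the paper treats compactness of $\Zvecsupp$ as an explicit hypothesis of Lemma \ref{lemma} (Putinar's representation), to be enforced by adjoining a ball constraint $a^2-\|z\|^2\geq 0$ when necessary, rather than as an automatic consequence of the model. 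Fortunately your proof of the proposition does not actually need that claim: the formal dual and weak duality hold for any closed support set, and the strong-duality result you cite hinges only on $\mu$ lying in the interior of the moment space, which is the hypothesis postulated in the statement; compactness only becomes essential later, for the Positivstellensatz step in the convergence theorem.
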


\begin{proposition}
Let us write the moment matrix $M_{\tau}(m)$ and the localizing matrices $M_{\tau-\left\lceil \frac{deg(q_j)}{2}  \right\rceil}(q_j m)$ in \eqref{eqn:probProbsingZvecmomrelax} as
\begin{subequations} \label{eqn:MM}
\begin{align}
M_{\tau}(m)=\!\!\!\!& \sum_{\alpha \in \setinteger{2 \tau}{\sizevec{z}}}\!\!\!\!B_\alpha m_\alpha, \\
 M_{\tau-\left\lceil \frac{deg(q_j)}{2}  \right\rceil}(q_j m)=\!\!\!\!& \sum_{\alpha \in \setinteger{2 \tau}{\sizevec{z}}}\!\!\!\!C^{(j)}_\alpha m_\alpha,
\end{align}
\end{subequations}
where $B_\alpha$ and $C^{(j)}_\alpha$ are  symmetric matrices properly defined.

Then, the dual  of the SDP problem \eqref{eqn:probProbsingZvecmomrelax}  is 
 given by:
\begin{subequations} \label{eqn:dd}
{  \begin{align}
& \overline{t}^{\tau}=\inf_{\nu,X \succeq 0,Y^{(j)}\succeq 0}   \ \  \nu^\top \mu \\
&  s.t. \  \nu_1 \!+\!\!\! \sum_{i=2}^{\sizevec{f}}\!\nu_i\bm{\tilde{f}}_{i,\alpha}\!-\!\bm{h}_{\alpha}\!=<\!\! X\!,B_{\alpha}\!\!>\!\!+\!\!\sum_{j=1}^{\sizevec{q}}\!\! <\!\!Y^{(j)}\!\!,C^{(j)}_{\alpha}\!\!>\!,    \alpha \! \in \! \setinteger{2 \tau}{\sizevec{z}}\!,
\end{align}}
\end{subequations}
with $<\!\! X\!,B_{\alpha}\!\!>$ (resp. $ <\!\!Y^{(j)}\!\!,C^{(j)}_{\alpha}\!\!>$) being the trace of  the matrix $XB_{\alpha}$ (resp. $Y^{(j)}C^{(j)}_{\alpha}$).

Obviously, by weak duality:
\begin{equation} \label{eqnp2}
 \overline{t}^{\tau} \geq \overline{p}^{\tau}.
\end{equation}
\end{proposition}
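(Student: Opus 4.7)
The plan is to derive the dual of the primal SDP \eqref{eqn:probProbsingZvecmomrelax} by standard Lagrangian duality, exploiting linearity of both the objective and the LMI constraints in the moment variables $m_\alpha$, and then to obtain weak duality directly from the resulting Lagrangian identity.

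First I would justify the affine representations \eqref{eqn:MM}. Each entry of the moment matrix $M_\tau(m)$ equals some $m_\gamma$ (specifically, $m_{\beta+\beta'}$ where $\beta,\beta'$ label the corresponding row and column in the basis $b_\tau(z)$), so $B_\alpha$ is simply the symmetric $0/1$ matrix with a one in every position whose row--column exponents sum to $\alpha$. For each localizing matrix, each entry is an integral of $q_j(z)\,z^{\beta+\beta'}$ against the measure, which expands linearly in the moments with coefficients $\bm{q}_{j,\gamma}$; grouping the multiplier of each $m_\alpha$ yields a symmetric $C^{(j)}_\alpha$.

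Next I would form the Lagrangian by introducing scalar multipliers $\nu_1,\ldots,\nu_{\sizevec{f}}$ for the moment equalities (treating the normalization $m_{0\cdots 0}=1$ as the $i=1$ moment equality via the convention $\tilde f_1\equiv 1$, so that $\bm{\tilde f}_{1,\alpha}=\delta_{\alpha,0}$ and $\mu_1=1$), together with positive semidefinite matrix multipliers $X\succeq 0$ and $Y^{(j)}\succeq 0$ for the LMIs. Using \eqref{eqn:MM}, the Lagrangian is
\begin{equation*}
L(m,\nu,X,Y) = \sum_{i=1}^{\sizevec{f}} \nu_i \mu_i + \sum_{\alpha \in \setinteger{2\tau}{\sizevec{z}}} m_\alpha \Big[ \bm{h}_\alpha - \sum_{i=1}^{\sizevec{f}} \nu_i \bm{\tilde{f}}_{i,\alpha} + \langle X, B_\alpha \rangle + \sum_{j=1}^{\sizevec{q}} \langle Y^{(j)}, C^{(j)}_\alpha \rangle \Big].
\end{equation*}
Since the $m_\alpha$ are free (unconstrained in sign), the dual function $\sup_m L$ is finite only when the coefficient of every $m_\alpha$ vanishes; this yields exactly the equality constraints in \eqref{eqn:dd} (with the isolated $\nu_1$ arising from $\nu_1\bm{\tilde f}_{1,\alpha}=\nu_1\delta_{\alpha,0}$), and the remaining value of $L$ is $\nu^\top\mu$. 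Minimizing over $(\nu,X,Y^{(j)})$ with $X,Y^{(j)}\succeq 0$ then recovers \eqref{eqn:dd}.

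Finally, weak duality follows directly: rearranging the dual equality gives $\bm{h}_\alpha=\sum_i \nu_i \bm{\tilde f}_{i,\alpha}-\langle X,B_\alpha\rangle-\sum_j\langle Y^{(j)},C^{(j)}_\alpha\rangle$, so for any primal feasible $m$ and dual feasible $(\nu,X,Y)$ we obtain, using the primal equalities together with the fact that the Frobenius inner product of two positive semidefinite matrices is nonnegative,
\begin{equation*}
\sum_\alpha \bm{h}_\alpha m_\alpha = \nu^\top\mu - \langle X,M_\tau(m)\rangle - \sum_j \langle Y^{(j)}, M_{\tau-\lceil \deg(q_j)/2\rceil}(q_j m)\rangle \leq \nu^\top\mu.
\end{equation*}
Taking the supremum on the left and the infimum on the right gives $\overline{p}^\tau\leq\overline{t}^\tau$. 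The only real difficulty is bookkeeping: correctly handling $\nu_1$ (which in \eqref{eqn:dd} appears as a bare term rather than attached to $\bm{\tilde f}_{1,\alpha}$) through the convention $\tilde f_1\equiv 1$, and keeping the trace/inner-product identifications $\langle X,M_\tau(m)\rangle=\sum_\alpha \langle X,B_\alpha\rangle m_\alpha$ straight. No strong duality is asserted, so no Slater-type constraint qualification for the SDP needs to be verified.
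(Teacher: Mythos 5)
Your derivation is correct: the Lagrangian/conic-duality argument with multipliers $\nu$ for the moment equalities and $X,Y^{(j)}\succeq 0$ for the LMIs, followed by the nonnegativity of the inner product of positive semidefinite matrices, is exactly the standard route, and the paper itself states this proposition without proof as a routine application of SDP weak duality. Your bookkeeping of $\nu_1$ via the convention $\tilde f_1\equiv 1$ (so that $\bm{\tilde f}_{1,\alpha}=\delta_{\alpha,0\cdots 0}$ and $m_{0\cdots 0}=\mu_1=1$) is consistent with how the paper uses the dual constraints later in the convergence proof, so there is no gap.
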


\begin{theorem}
Under the assumptions in Lemma \ref{lemma} and Proposition \ref{prop:dual}, the following convergence condition holds:
$\lim_{\tau \rightarrow \infty} \overline{p}^{\tau}=\overline{p}$. 
\end{theorem}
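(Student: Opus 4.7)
The strategy is to pass through the dual hierarchy \eqref{eqn:dd} and apply Putinar's Positivstellensatz (Lemma~\ref{lemma}). Combining weak duality \eqref{eqnp2} with the monotone bound \eqref{eqnp4} gives
\begin{equation*}
\overline{p} \;\leq\; \overline{p}^{\tau} \;\leq\; \overline{t}^{\tau} \qquad \text{for every } \tau \geq \tilde{\tau},
\end{equation*}
and Proposition~\ref{prop:dual} supplies $t^{\ast} = \overline{p}$. Hence it suffices to prove $\overline{t}^{\tau}\to t^{\ast}$ as $\tau\to\infty$.

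The lower bound $\overline{t}^{\tau}\geq t^{\ast}$ is immediate: any $\nu$ feasible for \eqref{eqn:dd} satisfies
\begin{equation*}
\nu_1 + \sum_{i=2}^{\sizevec{f}}\nu_i\tilde{f}_i(z) - h(z) \;=\; \sigma_0(z) + \sum_{j=1}^{\sizevec{q}} q_j(z)\sigma_j(z)
\end{equation*}
for SOS polynomials $\sigma_i$ extracted from the PSD matrices $X$ and $Y^{(j)}$ via the Gram representation built on the basis $\canbas{\tau}{}(z)$; since SOS polynomials are nonnegative and $q_j(z)\geq 0$ on $\Zvecsupp$, the left-hand side is nonnegative on $\Zvecsupp$, so $\nu$ is feasible for the semi-infinite dual \eqref{eqn:pdual}, yielding $\overline{t}^{\tau}\geq t^{\ast}$.

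For the matching upper bound, I would fix $\epsilon>0$ and pick $\nu$ feasible for \eqref{eqn:pdual} with objective value at most $t^{\ast}+\epsilon$. The polynomial $p_{\nu}(z):=\nu_1+\sum_{i\geq 2}\nu_i\tilde{f}_i(z)-h(z)$ is nonnegative on $\Zvecsupp$ but possibly vanishes there, so I would perturb $\nu_1\mapsto \nu_1+\epsilon$ to obtain $p_{\nu}(z)+\epsilon>0$ strictly on the compact set $\Zvecsupp$. Lemma~\ref{lemma} then produces an SOS decomposition $p_{\nu}(z)+\epsilon=\sigma_0(z)+\sum_j q_j(z)\sigma_j(z)$ with SOS pieces of some finite degree, defining a threshold $\tau^{\ast}(\nu,\epsilon)$. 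For every $\tau\geq\tau^{\ast}$, the Gram matrices of the $\sigma_i$ provide PSD matrices $X$ and $Y^{(j)}$ that, together with the perturbed multipliers, verify the affine identity in \eqref{eqn:dd} by matching coefficients against the basis $\canbas{\tau}{}(z)\canbas{\tau}{\top}(z)$ entry-wise, as in \eqref{eqn:MM}. Its objective value is $(\nu_1+\epsilon)+\sum_{i\geq 2}\nu_i\mu_i \leq t^{\ast}+\epsilon+\epsilon\mu_1 = t^{\ast}+2\epsilon$ since $\mu_1=1$. Thus $\overline{t}^{\tau}\leq t^{\ast}+2\epsilon$ for all $\tau$ large enough; letting $\epsilon\downarrow 0$ gives $\overline{t}^{\tau}\to t^{\ast}$, and the sandwich above yields $\overline{p}^{\tau}\to \overline{p}$.

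The main obstacle is verifying the Archimedean hypothesis of Lemma~\ref{lemma} for the specific set $\Zvecsupp$ in \eqref{eqn:Zdesc}: the equality constraints coming from $(A(\unc)-\lambda I)x=0$ are encoded as pairs $q\geq 0,\; -q\geq 0$, so compactness alone does not yield the Archimedean property. The remedy, as noted after Lemma~\ref{lemma}, is to append to $\Zvecsupp$ the redundant ball constraint $a^2-\|z\|^2\geq 0$ for $a$ sufficiently large (this is possible because $\Unc$ is compact, $\|x\|^2\leq 1$, and the portion of $\mathcal{D}^c$ that consists of eigenvalues of $A(\unc)$ for $\unc\in\Unc$ is contained in a finite disk via Gershgorin-type bounds), and then to set $u_0=\cdots=u_{\sizevec{q}}=0$, $u_{\sizevec{q}+1}=1$. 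This is the only nontrivial assumption to check; once it is in place, the perturb-and-certify argument proceeds as above.
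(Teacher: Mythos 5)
Your proof is correct and follows essentially the same route as the paper: perturb a (near-)optimal solution of the semi-infinite dual \eqref{eqn:pdual} by $\varepsilon$, invoke Putinar's representation (Lemma \ref{lemma}) to obtain an SOS certificate, turn its Gram matrices into a feasible point of the relaxed dual \eqref{eqn:dd}, and sandwich $\overline{p}^{\tau}$ between $\overline{p}=t^{*}$ and $\overline{t}^{\tau}\leq t^{*}+O(\varepsilon)$ via weak duality and the zero duality gap from Proposition \ref{prop:dual}. The only differences are cosmetic refinements (using an $\varepsilon$-optimal $\nu$ rather than assuming the dual optimum is attained, the redundant check $\overline{t}^{\tau}\geq t^{*}$, and the remarks on the Archimedean condition, which the theorem already assumes).
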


\begin{proof}
Let $\nu^*$ be the optimal solution of problem \eqref{eqn:pdual}. Thus:
$t^{*}=\nu^{*}_1 + \sum_{i=2}^{\sizevec{f}}\nu^{*}_i \mu_i$,  
and  
$ \nu^*_1 + \sum_{i=2}^{\sizevec{f}}\nu^*_i \tilde{f}_i(z) -h(z) \geq 0 \ \ \forall z \in \Zvecsupp$.
Take $\varepsilon > 0$ arbitrary. Then:
\begin{equation*} \nu^*_1 + \sum_{i=2}^{\sizevec{f}}\nu^*_i \tilde{f}_i(z) -h(z) + \varepsilon > 0 \ \ \forall z \in \Zvecsupp. \end{equation*}
Since the polynomial $ \nu^*_1 + \sum_{i=2}^{\sizevec{f}}\nu^*_i \tilde{f}_i(z) -h(z) + \varepsilon$ is strictly positive  on $\Zvecsupp$, because of Lemma \ref{lemma}, there exist sum-of-squares polynomials $\sigma_j(z)$ ($j=0,\ldots,\sizevec{q}$) such that  
\begin{equation*}
\nu^*_1 + \sum_{i=2}^{\sizevec{f}}\nu^*_i \tilde{f}_i(z) -h(z) + \varepsilon=\sigma_0(z)+\sum_{j=1}^{\sizevec{q}}q_j(z)\sigma_j(z),
\end{equation*}
provided that $\sigma_0(z)$ and $\sigma_j(z)$ ($j=1,\ldots,\sizevec{q}$) have order $2\tau$ and $2\tau-2\left\lceil \frac{deg(q_j)}{2}  \right\rceil$, respectively,  for $\tau$ large enough. 

Let us write the SOS polynomials $\sigma_j(z)$ ($j=0,\ldots,\sizevec{q}$) as
$\sigma_j(z)=\sum_{i=1}^{r_j}\sigma_{ji}(z)^2$, 
and let $\bm{\sigma}_{ji}$ be the vector of coefficients of the polynomial $\sigma_{ji}(z) \in \mathbb{R}_{d_j}[z]$ in the basis $b_{d_j}(z)$, with 
$d_0=\tau$, $d_j=\tau-\left\lceil \frac{deg(q_j)}{2}  \right\rceil, \ \ j=1,\ldots,\sizevec{q}$.
Let us construct the matrices
\begin{equation} \label{eqn:XY}
X=\sum_{i=1}^{r_0}\bm{\sigma}_{0i}\bm{\sigma}'_{0i} \succeq 0, \ \  \ Y^{(j)}=\sum_{i=1}^{r_j}\bm{\sigma}_{ji}\bm{\sigma}'_{ji} \succeq 0.
\end{equation}
For an arbitrary $z\in \mathbb{R}^{\sizevec{z}}$, let us construct the  vector
\begin{equation} \label{eqn:mv}
m=b_{2\tau}(z)=\left[1\  z_1 \ \cdots \  z_{\sizevec{z}} \ z_1^2 \  z_1z_2 \  \cdots \  z_{\sizevec{z}}^{2\tau}\right].
\end{equation} 
Then, with $m$ as in \eqref{eqn:mv}, and $X$ and $Y^{(j)}$ as in \eqref{eqn:XY},  we have
\begin{align} \label{eqn:con4}
&<X,M_{\tau}(m)>+\sum_{j=1}^{\sizevec{q}}<Y^{(j)},M_{\tau-\left\lceil \frac{deg(q_j)}{2}  \right\rceil}(q_jm)>  \nonumber \\
=& \sigma_0(z)+\sum_{j=1}^{\sizevec{q}}q_j(z)\sigma_j(z)=   \displaystyle \nu^*_1 + \sum_{i=2}^{\sizevec{f}}\nu^*_i \tilde{f}_i(z) -h(z) + \varepsilon.
\end{align}
Since $z$ in \eqref{eqn:mv} is arbitrary, condition \eqref{eqn:con4} holds for any  $z\in \mathbb{R}^{\sizevec{z}}$. Thus, by rewriting $M_{\tau}(m)$ and $M_{\tau-\left\lceil \frac{deg(q_j)}{2}  \right\rceil}(q_jm)$ as in \eqref{eqn:MM}, we have:
{\small \begin{equation*}
<\!\! X,B_\alpha\!\!>\!+\!\!\sum_{j=1}^{\sizevec{q}}\!<\!\!Y^{(j)},C^{(j)}_\alpha\!\!>=\! \nu^*_1 \!+\!\!\! \sum_{i=2}^{\sizevec{f}}\!\nu^*_i\bm{\tilde{f}}_{i,\alpha}\!-\!\bm{h}_{\alpha}\!+\!\varepsilon, \  \alpha \! \in \! \setinteger{2 \tau}{\sizevec{z}}.
\end{equation*}}
Thus, $\nu_1=\nu_1^*+\varepsilon$, $\nu_i=\nu_i^*$ ($i=2,\ldots,\sizevec{f}$), and $X$ and $Y^{(j)}$ in \eqref{eqn:XY} are feasible for problem \eqref{eqn:dd}. For these values of $\nu$, $X$ and $Y^{(j)}$,  the cost function in \eqref{eqn:XY} is equal to 
$\nu_1^*+\sum_{i=2}^{\sizevec{f}}\nu_i^*\mu_i+\varepsilon=t^*+\varepsilon$. Therefore,
\begin{equation} \label{eqnp1}
\overline{t}^{\tau} \leq t^*+\varepsilon.
\end{equation} 
By combining eqs. \eqref{eqnp4}, \eqref{eqnp3},  \eqref{eqnp2} and \eqref{eqnp1}, we have:
\begin{equation} \label{eqn5}
t^*=\overline{p} \leq \overline{p}^\tau \leq \overline{t}^{\tau} \leq t^*+\varepsilon.
\end{equation}
 Summarizing, for every $\varepsilon>0$, there exists $\tau$ large enough such that (see \eqref{eqn5}):
$\overline{p} \leq \overline{p}^\tau \leq  \overline{p}+\varepsilon$,
or equivalently, 
$
\lim_{\tau \rightarrow \infty}  \overline{p}^\tau=\overline{p}$.
\end{proof}}

 \section*{Acknowledgment} \vspace{-0.0cm}
The authors would like to thank    Prof. Nicola Guglielmi for the interesting discussions on $\mathcal{D}$-stability analysis and  Prof. Johan L\"ofberg for his suggestions on the implementation of the Lasserre's hierarchy with moment constraints in YALMIP.
%

\bibliographystyle{IEEEtran}
\bibliography{ROB_DIOPH}

\vfill

\end{document}